\documentclass[11pt]{article}
\usepackage{amsfonts,epsf,amsmath,amssymb,amsthm}
\usepackage{tikz}
\usepackage[ruled,vlined]{algorithm2e}
\usepackage{graphicx}
\usepackage{latexsym}
\usepackage{enumerate}
\usepackage{setspace}
\usepackage{color}
\usepackage[cp1250]{inputenc}
%%%%%%%%%%%%%%%%%%%%%%%%%%%%%%%%%%%%%%%%%%%%%%%%%%%%%%%%%%%%%%%%
%%%%%%%%%%%%%%%%%%%%% Page setup: textwidth, etc
\textwidth 14.0cm
\textheight 20.5cm
\oddsidemargin 0.4cm
\evensidemargin 0.4cm
\voffset -1cm
\parskip=4pt
\setlength{\unitlength}{1mm}
%%%%%%%%%%%%%%%%%%%%%%%%%%%%%%%%%%%%%%%%%%%%%%%%%%%%%%%%%%%%%%%%
%%%%%%%%%%%%%%%%%%%%% Setup for lemmas, propositions, etc
\newtheorem{thm}{Theorem}[section]

\newtheorem{conj}[thm]{Conjecture}
\newtheorem{cor}[thm]{Corollary}

\newtheorem{lemma}[thm]{Lemma}

\newtheorem{prob}{Problem}

%%%%%%%%%%%%%%%%%%%%%%%%%%%%%%%%%%%%%%%%%%%%%%%%%%%%%%%%%%%%%%%%%
%%%%%%%%%%%%%%%%%%%%%%% new commands

% Centering the EPS graphics inside the FIGURE enviro

\newcommand{\cF}{{\cal F}}

%%%%%%%%%%%%%%%%%%%%%%%%%%%%%%%%%%%%%%%%%%%%%%%%%%%%%%%%%%%%%%%%%%%%
%%%%%%%%%%%%%%%%%%% new environments

\begin{document}

\title{On well-covered Cartesian products}

\author{
$^{a}$Bert L. Hartnell
\and
$^{b}$Douglas F. Rall
\and
$^{c}$Kirsti Wash \\
}

\date{\today}

\maketitle

\begin{center}
$^a$ Department of Mathematics and Computing Science, Halifax, Nova Scotia\\
$^c$ Department of Mathematics, Trinity College, Hartford, CT\\
$^b$ Department of Mathematics, Furman University, Greenville, SC\\

\end{center}

\begin{abstract}
In 1970, Plummer defined a well-covered graph to be a graph $G$ in which all maximal independent sets are in fact
maximum. Later Hartnell and Rall showed that if the Cartesian product $G \Box H$ is well-covered, then at least one of $G$
or $H$ is well-covered. In this paper, we consider the problem of classifying all well-covered Cartesian products. In particular,
we show that if the Cartesian product of two nontrivial, connected graphs of girth at least $4$ is well-covered, then
at least one of the graphs is $K_2$.  Moreover, we show that $K_2 \Box K_2$ and $C_5 \Box K_2$ are the only well-covered
Cartesian products of nontrivial, connected graphs of girth at least $5$.
\end{abstract}

\noindent
{\bf Keywords:} well-covered graph, Cartesian product, $1$-well-covered, isolatable vertex\\

\noindent
{\bf AMS subject classification (2010)}: 05C69, 05C65

%%%%%%%%%%%%%%%%%%%%%%%%%%%%%%%%%%%%%%%%%%%%%%%%%%%%%%%%%%%%%%%%%%%%%
\section{Introduction}
A graph $G$ is called well-covered if all maximal independent sets of $G$ have the
same cardinality.   This class of graphs  was introduced by Plummer~\cite{P-1970} in 1970.
To date the study of the class of well-covered graphs seems to be primarily concentrated on
finding good characterizations of various subclasses.  Examples of subclasses of well-covered graphs that have been  characterized
include those of girth 8 or more~\cite{FH-1983}, those of girth at least $5$~\cite{FHN-1993},
those with neither $4$-cycles nor $5$-cycles~\cite{FHN-1994}, and subcubic~\cite{CER-1993}.
See the survey articles by Plummer~\cite{P-1993} and Hartnell~\cite{H-1999}.

Topp and Volkmann~\cite{TV-1992} initiated the study of well-covered graphs in the realm of several
of the standard graph products.  They proved that a lexicographic product $G[H]$ is well-covered if and only
if both of $G$ and $H$ are well-covered.  Although they did not characterize the direct products that
are well-covered, they did prove that if $G$ and $H$ have no isolated vertices and the direct product $G \times H$ is
well-covered, then both of $G$ and $H$ are well-covered and $\alpha(G)\cdot|V(H)|=\alpha(H)\cdot|V(G)|$. (Here $\alpha$
is the vertex independence number.)
Cartesian products turn out to be more difficult to deal with as far as well-covered is concerned. Topp and Volkmann showed
that the Cartesian product of any two complete graphs is well-covered and the Cartesian product of two cycles
is well-covered if and only if at least one of the cycles is $C_3$.  Left unanswered in~\cite{TV-1992}
was the question of whether a Cartesian product $G \Box H$ being well-covered implies that at least
one of $G$ or $H$ is well-covered.

Fradkin~\cite{F-2009} pursued this question and showed that the Cartesian product of any two triangle-free
graphs, neither of which is well-covered, is also not well-covered.  Hartnell and Rall~\cite{HR-2013} settled the
problem of Topp and Volkmann.
\begin{thm} [\cite{HR-2013}] \label{thm:CartesianWC}
If $G$ and $H$ are graphs such that $G \Box H$ is well-covered, then at least one
of $G$ or $H$ is well-covered.
\end{thm}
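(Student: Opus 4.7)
I argue the contrapositive. Assume neither $G$ nor $H$ is well-covered and fix maximal independent sets $I_1,I_2$ of $G$ with $|I_1|<|I_2|$ and $J_1,J_2$ of $H$ with $|J_1|<|J_2|$; the goal is to exhibit two maximal independent sets of $G\Box H$ of different cardinalities.

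The centerpiece is the following construction. For any maximal independent set $I$ of $G$, any maximal independent set $J$ of $H$, and any maximal independent set $M$ of the subproduct $(G-I)\Box(H-J)$ (viewed as a subset of $V(G)\times V(H)$), the set
\[
S(I,J,M)\;=\;(I\times J)\,\cup\,M
\]
is a maximal independent set of $G\Box H$ of cardinality $|I|\cdot|J|+|M|$. I would verify this first. Independence: any edge of $G\Box H$ inside $I\times J$ would force an edge of $G$ or of $H$ (contradicting independence of $I$ or $J$); inside $M$ it is assumed; and between the two pieces no edge is possible because their $G$-coordinates (respectively $H$-coordinates) lie in the disjoint sets $I$ and $V(G)\setminus I$ (resp.\ $J$ and $V(H)\setminus J$). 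Maximality is a case analysis on $(g,h)\notin S$: when $g\in I$ and $h\notin J$, the maximality of $J$ gives a neighbor $h'\in J$ of $h$ so that $(g,h')\in I\times J\subseteq S$; the case $g\notin I,h\in J$ is symmetric; and the interior case $g\notin I,h\notin J$ is dominated by $M$ using the maximality of $M$ in $(G-I)\Box(H-J)$.

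With this tool, the argument reduces to producing two realizations of the formula $|I|\cdot|J|+|M|$ with different values. The easy subcase is when some subproduct $(G-I)\Box(H-J)$ is itself not well-covered: varying $M$ alone then yields two totals that differ. Otherwise I would compare $S(I_a,J_b,M_{ab})$ across $(a,b)\in\{1,2\}^2$, using the flexibility in the main term $|I_a|\cdot|J_b|$ together with the now-fixed correction $m(I_a,J_b):=|M_{ab}|$; generically at least two of these four totals differ.

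The main obstacle is the residual situation in which all four subproducts $(G-I_a)\Box(H-J_b)$ are well-covered \emph{and} the four totals happen to coincide. I expect to resolve this by strong induction on $|V(G)|+|V(H)|$: the inductive hypothesis forces $G-I_a$ or $H-J_b$ to be well-covered for each pair $(a,b)$, and combining this with the assumption that $G$ and $H$ themselves are not well-covered produces enough structural rigidity on their maximal independent sets either to rule out the numerical coincidence of the four totals, or to construct the required two sets by a more refined layered assignment (placing carefully chosen independent sets of $G$ on each $H$-layer subject to the disjointness constraint across edges of $H$). Carrying out this final case analysis cleanly, together with degenerate boundary cases such as $I=V(G)$ (which in fact cannot occur under $|I_1|<|I_2|$), is where the bulk of the technical work will lie.
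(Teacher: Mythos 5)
This theorem is quoted from the cited reference [HR-2013]; the present paper contains no proof of it, so there is nothing internal to compare your argument against, and I will judge it on its own terms. Your centerpiece construction is correct and is a good observation: for maximal independent sets $I$ of $G$ and $J$ of $H$ one has $N[I\times J]=V(G\Box H)-\bigl((V(G)-I)\times(V(H)-J)\bigr)$, so $(G\Box H)-N[I\times J]=(G-I)\Box(H-J)$, your sets $S(I,J,M)$ are exactly the maximal independent sets of $G\Box H$ containing $I\times J$, and your ``easy subcase'' is precisely Lemma~\ref{lem:basic} applied to the independent set $I\times J$.

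The problem is that the entire content of the theorem sits in the case you defer, and your plan for it does not work as described. ``Generically at least two of the four totals differ'' is not an argument: if $G\Box H$ were well-covered, all four totals $|I_a|\cdot|J_b|+|M_{ab}|$ would automatically equal $\alpha(G\Box H)$, with the correction terms $|M_{ab}|=\alpha\bigl((G-I_a)\Box(H-J_b)\bigr)$ silently absorbing the variation in the products $|I_a|\cdot|J_b|$; you give no reason why this coincidence cannot happen, and there is no a priori bound tying $|M_{ab}|$ to $|I_a|\cdot|J_b|$ that would rule it out. The proposed induction is also not set up: your inductive statement concerns products of two \emph{non}-well-covered factors, but $G-I_a$ and $H-J_b$ may each be well-covered even though $G$ and $H$ are not, so the hypothesis does not propagate to the subproducts, and learning from induction that one of $G-I_a$, $H-J_b$ is well-covered yields neither a contradiction nor control over $|M_{ab}|$. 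As written, the proposal establishes only the special case in which some deleted subproduct $(G-I)\Box(H-J)$ fails to be well-covered; the residual case, which is where the theorem actually lives, remains unproved, and closing it requires a genuinely new idea (in [HR-2013] this is done by a different, more delicate exchange argument rather than by comparing these four totals).
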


This suggests an interesting problem.
\begin{prob}\label{prob:1}
For a given graph $H$, characterize those graphs $G$ such that $G \Box H$ is well-covered.
\end{prob}

In Section~\ref{sec:prelim} we give two constructions that suggest the general solution to this problem will
likely be difficult.  In Section~\ref{sec:reduction-of-girth} we provide a partial solution to Problem~\ref{prob:1}
by first considering the Cartesian product of two nontrivial,
connected graphs of girth at least $4$ and show that if this Cartesian product is well-covered, then one of the
two factors is $K_2$.   Moreover, we show that for any two, nontrivial,
connected graphs $G$ and $H$, both with girth at least $5$, the Cartesian product $G\Box H$ is well-covered if and only if
$G\Box H \in \{K_2 \Box K_2, C_5 \Box K_2\}$.

%%%%%%%%%%%%%%%%%%%%%%%%%%
\section{Notation and Preliminary Results} \label{sec:prelim}

A subset $A$ of the vertex set of a graph is \emph{independent} if the vertices in $A$ are pairwise nonadjacent.  If
an independent set $I$ of $V(G)$ is maximal (with respect to being) independent, then every vertex in $V(G)-I$ has at least one
neighbor in $I$.  Thus, a maximal independent set $I$ in $G$ is also dominating.  That is, $N[I]=V(G)$.  The
smallest cardinality, $i(G)$, of a maximal independent set in $G$ is the \emph{independent domination number} of $G$.
The \emph{vertex independence number} of $G$ is the largest cardinality of the maximal independent sets of $G$ and is
denoted $\alpha(G)$.  A graph $G$ is \emph{well-covered} if all the maximal independent sets of $G$ have the
same cardinality.  Thus, $G$ is  well-covered if and only if $i(G)=\alpha(G)$.  If $J$ is any independent
set in a graph $G$, then $J$ can be extended to a maximal independent set of $G$ by repeatedly
adding new vertices that are not dominated by the current set.  Thus, in a well-covered graph $G$
any independent set (in particular, any vertex) is contained in an independent set of cardinality $\alpha(G)$.

If $G$ and $H$ are two finite, simple graphs, then their \emph{Cartesian product}, denoted $G \Box H$,
is the graph with vertex set $V(G) \times V(H)$.  Two vertices $(g_1,h_1)$ and $(g_2,h_2)$ are
adjacent in $G \Box H$ if one of the following holds:
\begin{itemize}
\item $g_1=g_2$ and $h_1h_2 \in E(H)$,
\item $h_1=h_2$ and $g_1g_2 \in E(G)$.
\end{itemize}
The graphs $G$ and $H$ are called the \emph{factors} of $G \Box H$. For a given $g \in V(G)$, the subgraph of $G \Box H$
induced by the set of vertices $\{(g,h)\colon h \in V(H)\}$ is called an \emph{$H$-layer} and is denoted by
$^g\!H$.  In a similar way, for a fixed $h \in V(H)$ the subgraph $G^h$ induced by $\{(g,h)\colon g \in V(G)\}$
is called a \emph{$G$-layer}.  By definition, every $H$-layer is isomorphic to $H$, and every $G$-layer is
isomorphic to $G$.  A significant portion of this paper is dedicated to the study of Cartesian products where one of the factors is $K_2$.
We assume that $V(K_2)=[2]=\{1,2\}$. (For a positive integer $n$ we use $[n]$ to denote the set of
positive integers no larger than $n$.) The Cartesian product $G \Box K_2$ is called the \emph{prism of $G$} and
$G$ is called the \emph{base} of the prism.
In the prism of a graph $G$ we will simplify the notation and write $g^i$ in place of $(g,i)$ for $i \in [2]$ and $g \in V(G)$.
Also for $i \in [2]$, if $A \subseteq V(G)$ we write $A^i$ to denote the set of vertices $\{a^i\colon a\in A\}$.

A vertex of degree 1 in a graph is called a \emph{leaf} and its unique neighbor is called a \emph{support vertex}.
An edge incident with a leaf is a \emph{pendant} edge.
A support vertex that has more than one leaf as a neighbor is called a \emph{strong} support vertex.  If a
graph $G$ has a strong  support vertex $x$, then $G$ is not well-covered.  This is easily seen by letting $I$ be a maximal independent set
of $G-N[x]$.  The set $I \cup \{x\}$ is a maximal independent set of $G$, and yet if $L$ denotes the set of leaves
adjacent to $x$, then $I \cup L$ is a larger independent set.  The \emph{girth}
of a graph $G$, denoted by $g(G)$, is the length of its shortest cycle unless $G$ is a forest, in which case we
define the girth to be $\infty$.   For $u,v \in V(G)$, the length of
the shortest path in $G$ joining $u$ to $v$ is denoted $d_G(u,v)$, or simply $d(u,v)$ if the graph is clear from
the context.

If $I$ is any independent set in a well-covered graph $G$ and $J_1$ and $J_2$ are maximal independent sets in
the induced subgraph $G-N[I]$, then both $I \cup J_1$ and $I \cup J_2$ are maximal independent in $G$.  It
follows immediately that $|J_1|=|J_2|$, and thus $G-N[I]$ is well-covered.  This important and useful property of
a well-covered graph was  proved by Finbow et al.~\cite{FHN-1993}.
We state its contrapositive form since we will often use it  to  show that a graph is not well-covered.
\begin{lemma} [\cite{FHN-1993}] \label{lem:basic}
If $G$ is a graph and $I$ is an independent set of $G$ such that $G-N[I]$ is not
well-covered, then $G$ is not well-covered.
\end{lemma}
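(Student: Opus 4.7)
The plan is to prove the contrapositive: assuming $G$ is well-covered and $I$ is an independent set of $G$, I will show that $G-N[I]$ is well-covered. This is suggested by the paragraph preceding the lemma, which I will expand into a formal argument.

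First I fix two maximal independent sets $J_1$ and $J_2$ of the induced subgraph $G-N[I]$, and my goal is to argue $|J_1|=|J_2|$. The natural strategy is to lift each $J_k$ to a maximal independent set of $G$ by taking the union $I \cup J_k$, since the well-coveredness of $G$ will then force these two lifts to have the same cardinality. From that equality, subtracting $|I|$ on each side (which is legitimate because $J_k$ and $I$ are disjoint, as $J_k \subseteq V(G)\setminus N[I]$) yields $|J_1|=|J_2|$.

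The substantive step is therefore verifying that $I \cup J_k$ is in fact a maximal independent set of $G$. Independence is immediate: no vertex of $J_k$ is adjacent to any vertex of $I$ (else it would lie in $N[I]$ and thus not in $G-N[I]$), and $I$ and $J_k$ are each independent on their own. For maximality in $G$, I need to show every $v \in V(G)\setminus (I \cup J_k)$ is adjacent to some element of $I \cup J_k$. I split this into two cases: if $v \in N[I]\setminus I$ then by definition $v$ has a neighbor in $I$; if $v \notin N[I]$ then $v$ is a vertex of $G-N[I]$ not in $J_k$, so the maximality of $J_k$ in $G-N[I]$ gives a neighbor of $v$ inside $J_k$.

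I do not expect any real obstacle here; the only thing to be careful about is the case split in the maximality check, specifically not to conflate ``maximal in $G-N[I]$'' with ``maximal in $G$'' when handling vertices of $N[I]$. Once the lift $I \cup J_k$ is shown to be maximal in $G$, invoking the hypothesis that $G$ is well-covered closes the argument, and the contrapositive then yields the statement of the lemma.
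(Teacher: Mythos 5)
Your proposal is correct and follows exactly the argument the paper sketches in the paragraph preceding the lemma: lift each maximal independent set $J_k$ of $G-N[I]$ to the maximal independent set $I\cup J_k$ of $G$, invoke well-coveredness of $G$, and subtract $|I|$. The only difference is that you spell out the independence and maximality verifications that the paper leaves implicit, which is a faithful expansion rather than a different route.
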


%%%%%%%%%%%%%%%%%%%%%%%%%%%%%%%%%%%%%%%%%%%%%%%%%%%%%%%%
A vertex $w$ in an arbitrary graph $H$ is \emph{isolatable} in $H$ if there exists an independent set $M$ in $H$ such
that $\{w\}$ is a (isolated) component in $H-N[M]$.  Equivalently, $w$ is isolatable in $H$ if there exists
an independent set $J$ of $H$ such that $V(H)-N[J]=\{w\}$.
As in~\cite{FHN-1993}, we say a vertex $x$ of a well-covered graph $G$ is \emph{extendable} in $G$
if $G-x$ is well-covered and $\alpha(G-x)=\alpha(G)$. A well-covered graph $G$ is {\it $1$-well-covered} if every $v \in V(G)$ is
extendable in $G$.  In~\cite{FHN-1993} it is proved that in a well-covered graph the notions of extendable and not
isolatable are equivalent.
\begin{thm} [\cite{FHN-1993}] \label{thm:extnotisolatable}
Let $G$ be a well-covered graph.  A vertex $x$ of $G$ is an extendable vertex of $G$ if and only if $x$ is not
isolatable in $G$.
\end{thm}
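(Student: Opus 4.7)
The plan is to prove both implications by contradiction, exploiting the following dichotomy for a maximal independent set $S$ of $G-x$: either $x$ has a neighbor in $S$, in which case $S$ is already maximal in $G$ and so has cardinality $\alpha(G)$ by well-coveredness, or $x$ has no neighbor in $S$, in which case $S\cup\{x\}$ is independent and is maximal in $G$ (every other vertex lies in $N[S]$), forcing $|S|=\alpha(G)-1$. This dichotomy drives the entire argument.

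For the forward direction, suppose $x$ is extendable but, for contradiction, also isolatable, witnessed by an independent set $J$ with $V(G)-N[J]=\{x\}$. Then $J\cup\{x\}$ is independent and dominates $V(G)$, so it is a maximal independent set of $G$; by well-coveredness of $G$, $|J|=\alpha(G)-1$. Since $V(G-x)\setminus J\subseteq N[J]$, the set $J$ is also maximal in $G-x$, producing a maximal independent set of $G-x$ of size $\alpha(G)-1<\alpha(G)=\alpha(G-x)$. This contradicts the well-coveredness of $G-x$.

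For the reverse direction, assume $x$ is not isolatable, and argue that no maximal independent set of $G-x$ can fall into the second case of the dichotomy. Let $S$ be such a set, so $x\notin N[S]$ and $S\cup\{x\}$ is maximal in $G$; hence $V(G)-N[S]\subseteq N[x]$ and contains $x$. If $V(G)-N[S]=\{x\}$, then $S$ itself witnesses that $x$ is isolatable, contrary to hypothesis. Otherwise there exists $v\in(V(G)-N[S])\setminus\{x\}$; such a $v$ lies in $V(G-x)\setminus S$ and has no neighbor in $S$, so $S\cup\{v\}$ is independent in $G-x$, contradicting maximality of $S$. Thus every maximal independent set of $G-x$ falls into the first case and has size $\alpha(G)$, so $G-x$ is well-covered with $\alpha(G-x)=\alpha(G)$, giving that $x$ is extendable.

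The only subtlety lies in the reverse direction: neither hypothesis alone suffices to eliminate the problematic Case~(b), but non-isolatability rules out $V(G)-N[S]=\{x\}$ while the maximality of $S$ in $G-x$ rules out $V(G)-N[S]\supsetneq\{x\}$, so the two assumptions are precisely complementary. The forward direction is essentially immediate once the dichotomy is recorded.
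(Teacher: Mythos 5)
Your argument is correct. Note, however, that the paper does not prove this statement at all: Theorem~\ref{thm:extnotisolatable} is quoted from Finbow, Hartnell and Nowakowski~\cite{FHN-1993} without proof, so there is no in-paper argument to compare against. Judged on its own, your proof is sound and self-contained. The dichotomy for a maximal independent set $S$ of $G-x$ (either $x$ has a neighbor in $S$, making $S$ maximal in $G$ of size $\alpha(G)$, or $x$ has none, making $S\cup\{x\}$ maximal in $G$ and forcing $|S|=\alpha(G)-1$) is exactly the right organizing principle. The forward direction correctly observes that an isolating set $J$ is simultaneously maximal in $G-x$ and of size $\alpha(G)-1$, contradicting extendability; the reverse direction correctly splits the problematic case according to whether $V(G)-N[S]$ equals $\{x\}$ (ruled out by non-isolatability) or properly contains it (ruled out by maximality of $S$ in $G-x$). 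The only point worth making explicit is that in the reverse direction you conclude $\alpha(G-x)=\alpha(G)$ because \emph{every} maximal independent set of $G-x$ has been shown to have cardinality exactly $\alpha(G)$, which simultaneously yields well-coveredness of $G-x$; you do state this, and it is where both hypotheses are consumed, as your closing remark accurately notes.
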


The next result by Finbow and Hartnell~\cite{FH-2010} shows that graphs of girth at least 4 having no
isolatable vertices are actually well-covered.  We provide a slightly expanded proof for the sake of
completeness.
\begin{thm} [\cite{FH-2010}] \label{thm:noisolatable}
Let $G$ be a graph with $g(G) \ge 4$.  If no vertex in $G$ is isolatable, then  $G$ is well-covered.
\end{thm}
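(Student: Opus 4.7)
My plan is to argue the contrapositive by strong induction on $|V(G)|$. Suppose $G$ has girth at least $4$ and no isolatable vertex, and assume for contradiction that $G$ is not well-covered.

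The first step is to show that $G - N[x]$ is well-covered for every $x \in V(G)$. The graph $G - N[x]$ trivially has girth at least $4$, and it inherits the no-isolatable-vertex property: if some $w \in V(G-N[x])$ were isolatable in $G-N[x]$ via an independent set $M$, then $M \cup \{x\}$ is independent in $G$ and satisfies $V(G) - N_G[M \cup \{x\}] = \{w\}$, contradicting the hypothesis on $G$. Since $|V(G - N[x])| < |V(G)|$, the inductive hypothesis applies and $G - N[x]$ is well-covered.

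Next, let $I$ be a minimum maximal independent set of $G$ and let $I^*$ be a maximum independent set. For each $v \in I$ the set $I \setminus \{v\}$ is maximal independent in the well-covered graph $G - N[v]$, forcing $\alpha(G - N[v]) = i(G) - 1$; analogously $\alpha(G - N[u]) = \alpha(G) - 1$ for every $u \in I^*$. Any vertex in $I \cap I^*$ would then give $i(G) = \alpha(G)$, contradicting non-well-coveredness, so $I \cap I^* = \emptyset$. By maximality each vertex of $I$ has a neighbor in $I^*$ and vice versa, and since $|I^*| > |I|$ the pigeonhole principle yields some $v \in I$ with two distinct neighbors $u_1, u_2 \in I^*$.

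The main obstacle is this last step: extracting a contradiction from the configuration just found using the girth assumption. Because $g(G)\ge 4$, the neighborhood $N(u)$ is independent for each $u$, so $P_{I^*}(u)$ is an independent set whose union with $I^*\setminus\{u\}$ is again maximal independent; combined with the no-isolatable hypothesis, this forces $|P_{I^*}(u)| = 1$ for every $u \in I^*$, since a value of $0$ would make $u$ isolatable and a value of at least $2$ would produce a maximal independent set strictly larger than $|I^*|=\alpha(G)$. Writing $p_i$ for the unique private neighbor of $u_i$ with respect to $I^*$, the absence of a triangle on $\{v,u_i,p_i\}$ forces $p_i \not\sim v$, so $p_1, p_2 \in V(G - N[v])$. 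In the cleanest subcase, when $v$ has exactly two $I^*$-neighbors and $p_1 \not\sim p_2$, the set $(I^* \setminus \{u_1, u_2\}) \cup \{v, p_1, p_2\}$ is independent of size $|I^*| + 1$, directly contradicting $|I^*| = \alpha(G)$. In the remaining subcases — $v$ has additional neighbors in $I^*$, or $p_1 \sim p_2$ — one brings in a private neighbor $q$ of $v$ with respect to $I$ (nonempty by no-isolatable) and uses the triangle-freeness of $N(v)$ and $N(u_i)$ to produce an analogous swap that again yields an independent set of size exceeding $\alpha(G)$. Carrying out this case analysis so that the girth condition cleanly forces the oversized independent set is the delicate heart of the argument.
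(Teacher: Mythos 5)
Your setup is sound: the induction showing $G-N[x]$ inherits the hypotheses, the conclusion that $I\cap I^*=\emptyset$, the pigeonhole vertex $v\in I$ with two neighbors $u_1,u_2\in I^*$, and the observation that every $u\in I^*$ has exactly one private neighbor $p_u$ with respect to $I^*$ (at least one by non-isolatability, at most one because $(I^*\setminus\{u\})\cup P_{I^*}(u)$ is independent when $N(u)$ is triangle-free) are all correct. But the proof stops exactly where the theorem gets hard. You resolve only the subcase in which $v$ has precisely two $I^*$-neighbors and $p_1\not\sim p_2$, and for the rest you say one ``produces an analogous swap'' and that carrying out the case analysis ``is the delicate heart of the argument.'' That heart is missing, and the open cases are not routine: girth $4$ permits $p_1\sim p_2$ (giving a $5$-cycle $v u_1 p_1 p_2 u_2$), and when $v$ has $k\ge 3$ neighbors $u_1,\dots,u_k$ in $I^*$ the natural swap $(I^*\setminus\{u_1,\dots,u_k\})\cup\{v,p_1,\dots,p_k\}$ fails whenever some $p_i\sim p_j$, which triangle-freeness does not forbid. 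Replacing $u_1$ by $p_1$ just produces a new maximum independent set in which $p_2$ now has two neighbors, so a naive iteration need not terminate; some extremal or potential-function device is required to stop the spiral, and none is supplied.

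That device is precisely what the paper's proof provides, and it renders most of your machinery unnecessary. The paper takes the family $\cF$ of pairs $(L,K)$ of maximal independent sets with $|K|<|L|=\alpha(G)$, first minimizes $|K-L|$, then minimizes $|L\cap N(v)|$ over $v\in K-L$, picks $u\in L_0\cap N(v)$, and uses non-isolatability of $u$ to find $x\in N(u)-N[L_0\setminus\{u\}]$; girth $4$ gives $xv\notin E(G)$, and the single exchange $M=(L_0\setminus\{u\})\cup\{x\}$ contradicts one of the two minimality choices according to whether $x\in K_0$. If you want to salvage your approach, you should import an analogous double-minimization (e.g., choose the pair $(I^*,I)$ and the vertex $v$ extremally before attempting the swap); as written, the argument is an incomplete sketch rather than a proof.
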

\begin{proof} Suppose that $G$ is a graph of girth at least 4 in which no vertex is isolatable, but that $G$ is not
well-covered.  Consider the set $\cF$ of certain pairs of maximal independent sets of $G$ defined by
\[\cF=\{(L,K)\,\colon \, K\,\, \text{and}\,\, L\,\, \text{are maximal independent in}\,\, G\,\, \text{and}\,\, |K|<|L|=\alpha(G)\}\,.\]
Since $G$ is not well-covered, $\cF\neq \emptyset$.  Let $n=\min\{\,|K-L|\,\colon\, (L,K) \in \cF\}$ and let
$m=\min\{\,|L \cap N(w)| \,\colon\, (L,K)\in \cF,\, n=|K-L|\,\text{and}\,\,w\in K-L\,\}$.  Choose a pair $(L_0,K_0)$
from $\cF$ such that $n=|K_0-L_0|$ and a vertex $v\in K_0-L_0$ such that $m=|L_0 \cap N(v)|$.  Let $u \in L_0 \cap N(v)$ and
set $J=L_0-\{u\}$.  Since $u$ is not dominated by the independent set $J$ and since $u$ is not isolatable ($G$ has no isolatable
vertices by assumption), there is a vertex $x \in N(u)-N(J)$. Since $g(G) \ge 4$, $xv \not\in E(G)$.  Let
$M=J \cup \{x\}$.  We see that $M$ is a maximum independent set of $G$, and thus $(M,K_0) \in \cF$.  If $x\not\in K_0$, then
$|K_0-M|=n$ but $|M \cap N(v)|=m-1$, which is a contradiction to the choice of $m$.  On the other hand, if $x \in K_0$, then $|K_0-M|=n-1$,
which is a contradiction to the choice of $n$. Consequently, $\cF$ is empty, and $G$ is well-covered.
\end{proof}

Finbow et al.~\cite{FHN-1993} were able to give a complete description of the well-covered graphs of girth
at least $5$.
By analyzing this collection Pinter~\cite{P-1997b} observed the following characterization of connected $1$-well-covered
graphs of girth at least $5$.

\begin{thm} [\cite{P-1997b}] \label{thm:girth5noisolates}
If $G$ is a nontrivial, connected well-covered graph of girth at least $5$ that has
no isolatable vertex, then $G\cong K_2$ or $G \cong C_5$.
\end{thm}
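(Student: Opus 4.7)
The plan is to combine Theorem~\ref{thm:extnotisolatable} with the complete structural classification of connected well-covered graphs of girth at least $5$ due to Finbow, Hartnell, and Nowakowski~\cite{FHN-1993}. Since $G$ is well-covered with no isolatable vertex, Theorem~\ref{thm:extnotisolatable} immediately gives that every vertex of $G$ is extendable, i.e.\ $G$ is $1$-well-covered. The task then reduces to inspecting the FHN list and discarding every graph on it that has at least one isolatable vertex.

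According to the FHN classification, every connected well-covered graph of girth at least $5$ is either a member of an explicit short list of sporadic examples (including $K_1$, $C_5$, $C_7$, and a handful of small graphs) or admits a perfect matching consisting entirely of pendant edges. I would handle the pendant-matching family first. Assume $G$ has a leaf $v$ with support $u$ and that $|V(G)|\ge 3$. By connectedness, $u$ has a neighbor $w\ne v$. Extend $\{w\}$ to a maximum independent set $I$ of the well-covered graph $G$. Then $u\notin I$ because $uw\in E(G)$, and since $v$'s only neighbor $u$ is outside $I$, maximality of $I$ forces $v\in I$. Setting $J:=I-\{v\}$ produces an independent set with $V(G)-N[J]=\{v\}$: the only vertex that could possibly lose domination upon removing $v$ from $I$ is $u$, and $u$ is still dominated by $w\in J$. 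Hence $v$ is isolatable, contradicting the hypothesis. So every graph in this family that satisfies the hypotheses must be $K_2$.

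It remains to dispose of every sporadic graph other than $K_2$ and $C_5$. For each such graph I would exhibit an explicit independent set $J$ isolating a chosen vertex. For instance, in $C_7$ with vertices $v_1,\ldots,v_7$ taken cyclically, the set $J=\{v_3,v_6\}$ satisfies $V(C_7)-N[J]=\{v_1\}$, so $v_1$ is isolatable; analogous ad hoc checks handle each remaining graph on the FHN sporadic list. Finally, for $C_5$ one verifies directly that as $J$ ranges over all independent sets, $V(C_5)-N[J]$ is either empty, an edge, or a pair of vertices at distance $2$, but never a singleton; the same verification is immediate for $K_2$. The principal obstacle is purely bookkeeping: faithfully importing the complete sporadic list from~\cite{FHN-1993} and carrying out the small isolatability computation for each of its members. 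The pendant-edge argument is clean, and the $C_5$/$K_2$ verifications are one-line; it is the ``small handful'' of exceptional graphs on the FHN list that forces a case analysis.
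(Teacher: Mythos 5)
The paper itself offers no proof of this statement: it is imported from Pinter, who (as the surrounding text says) obtained it precisely by ``analyzing the collection'' of Finbow--Hartnell--Nowakowski. So your overall strategy --- translate ``no isolatable vertex'' via Theorem~\ref{thm:extnotisolatable}, then sweep through the FHN classification and exhibit an isolatable vertex in everything except $K_2$ and $C_5$ --- is exactly the intended route, and your individual computations (the leaf argument, the $C_7$ set $\{v_3,v_6\}$, the verification for $C_5$) are all correct.

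There is, however, a genuine gap: you have misstated the FHN classification, and the misstatement hides an entire family of graphs your argument never touches. The non-sporadic family in~\cite{FHN-1993} is not ``graphs with a perfect matching of pendant edges''; it is the family $PC$ of graphs whose vertex set can be partitioned into pendant edges \emph{and basic $5$-cycles}. Your leaf argument disposes of every member of $PC$ that actually has a pendant edge, but a member of $PC$ may have no pendant edge at all: its vertex set can be partitioned entirely into two or more basic $5$-cycles. For a concrete example, take two $5$-cycles $a_1\cdots a_5$ and $b_1\cdots b_5$ joined by the single edge $a_1b_1$; this graph is connected, well-covered, of girth $5$, has minimum degree $2$, and is not on the sporadic list, so your proof never certifies that it has an isolatable vertex (it does: $J=\{a_3,b_1\}$ leaves $a_5$ as an isolated component of $G-N[J]$). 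To close the gap you must add a case showing that every graph in $PC$ whose partition uses at least two basic $5$-cycles --- and more generally every member of $PC$ other than $K_2$ and $C_5$ that has no leaf --- contains an isolatable vertex, e.g.\ by locating a vertex of degree at least $3$ on some basic $5$-cycle and using its degree-$2$ cycle-neighbors. Without that case the induction over the classification is incomplete.
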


To see that a general solution to Problem~\ref{prob:1} is most likely very difficult, consider the following. Let $H$ be a
graph with maximum degree $k$, let $n >k$, and let $I$ be any maximal independent set
of $K_n \Box H$.  If $I$ does not intersect some $K_n$-layer, say $K_n^h$, then the vertices of $K_n^h$ are dominated by
$\cup_{w \in N(h)}(I \cap V(K_n^w))$.  Since $n>k$, the Pigeonhole Principle implies that $|I \cap V(K_n^w)| \ge 2$
for some $w \in N(h)$.  This contradicts the independence of $I$, and so $I$ contains exactly one vertex from
each $K_n$-layer.  Hence, $|I|=|V(H)|$ and we see that $K_n \Box H$ is well-covered.

In fact, by generalizing this idea we can, for any graph $H$ of maximum degree $k$, construct many graphs $G$
that have girth 3  such that $G \Box H$ is well-covered.
Here is one family of such graphs.  Let $r$ be any positive integer.  Let $G$ be any graph whose vertex set $V(G)$ can
be partitioned as $V_1\cup \cdots \cup V_r$ in such a way  that
\begin{enumerate}
\item The subgraph $G[V_i]$ of $G$ is a complete graph of order at least $3$ for each $i \in [r]$, and
\item For each $j \in [r]$ there is a subset $W_j$ of $V_j$ such that $|W_j| \ge k+1$ and
      $N(W_j) \subseteq V_j$.
\end{enumerate}
It is clear that such a graph $G$ is well-covered and $\alpha(G)=r$.
We claim that $G \Box H$ is a well-covered graph with independence number $r|V(H)|$.  For each $i \in [r]$ let
$G_i=G[V_i]$.  Any subset of $V(G)$ formed by choosing one vertex from each of $W_1,\ldots,W_r$ is independent,
and is in fact maximal independent, in $G$.  In addition, for each $h \in V(H)$ we can choose a subset $M_h$
of $W_1 \cup \cdots \cup W_r$ that contains
one vertex from $W_i$ for each $i \in [r]$ in such a way that $M_h \cap M_{h'}=\emptyset$ for any
$h'\in N(h)$.  This last sentence is true since $\Delta(H)=k$ and $|W_i| \ge k+1$.  It now follows that
the set $\cup_{h\in V(H)}(M_h \times \{h\})$ is a maximal independent set of $G \Box H$ with cardinality $r|V(H)|$.  On the
other hand, let $J$ be any maximal independent set of $G \Box H$.  Suppose there exists a $G$-layer, say $G^h$,
such that $|J \cap V(G^h)| < r$.  This implies that there exists $j \in [r]$ such that $J \cap (V_j \times \{h\})=\emptyset$.
In this case no vertex of $W_j \times \{h\}$ is dominated by $J \cap V(G^h)$.  Since $|W_j \times \{h\}| > k=\Delta(H)$,
we arrive at a contradiction.  Hence, $|J \cap V(G^x)| \ge r$, for every $x \in V(H)$.  Since $V(G)$ is covered by
$r$ complete subgraphs we conclude that $|J \cap V(G^x)| = r$ for every $x \in V(H)$.  Consequently, $|J|=r|V(H)|$,
and this implies that $G \Box H$ is well-covered.
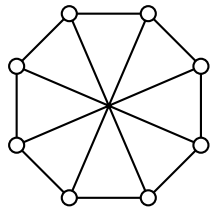
\begin{figure}[htb]
\tikzstyle{every node}=[circle, draw, fill=black!0, inner sep=0pt,minimum width=.2cm]
\begin{center}
\begin{tikzpicture}[thick,scale=.7]
  \draw(0,0) { % <-- START CO-ORDINATES
    %%%%EDGES
    +(2.50,3.50) -- +(3.50,2.50)
    +(3.50,2.50) -- +(3.50,1.00)
    +(3.50,1.00) -- +(2.50,0.00)
    +(2.50,0.00) -- +(1.00,0.00)
    +(1.00,0.00) -- +(0.00,1.00)
    +(0.00,1.00) -- +(0.00,2.50)
    +(0.00,2.50) -- +(1.00,3.50)
    +(1.00,3.50) -- +(2.50,3.50)
    +(1.00,3.50) -- +(2.50,0.00)
    +(1.00,0.00) -- +(2.50,3.50)
    +(0.00,1.00) -- +(3.50,2.50)
    +(0.00,2.50) -- +(3.50,1.00)
    %%%%VERTICES
    +(1.00,3.50) node{}
    +(2.50,3.50) node{}
    +(3.50,2.50) node{}
    +(3.50,1.00) node{}
    +(2.50,0.00) node{}
    +(1.00,0.00) node{}
    +(0.00,1.00) node{}
    +(0.00,2.50) node{}
    %%%%LABELS

      };
\end{tikzpicture}
\end{center}
\vskip -0.6 cm \caption{The graph $H$.} \label{fig:H}
\end{figure}

For a specific example, let $H$ be the graph shown in Figure~\ref{fig:H}.  Let $G_1,
G_2$ and $G_3$ be complete graphs of order $10$ with $V(G_1)=\{x_1,\ldots,x_{10}\}$,
$V(G_2)=\{y_1,\ldots,y_{10}\}$, and $V(G_3)=\{z_1,\ldots,z_{10}\}$.  The graph $G$ is
formed from the disjoint union of $G_1,G_2,G_3$ by adding any subset of edges among
vertices in the set $\cup_{i=1}^6 \{x_i,y_i,z_i\}$.

%%%%%%%%%%%%%%%%%%
\section{Factors of Girth at Least 4} \label{sec:reduction-of-girth}

If a graph $G$ is not connected and has components $C_1, \ldots,C_m$, then for any graph $H$,
the product $G \Box H$ is the disjoint union of  $C_1\Box H, \ldots, C_m \Box H$.  This graph, $G \Box H$,
is well-covered if and only if $C_i \Box H$ is well-covered for every $i \in [m]$.  Consequently,
to determine which Cartesian products are well-covered we can restrict our attention to those in which both
factors are  connected.   As the examples
presented at the end of Section~\ref{sec:prelim} show, the characterization of well-covered Cartesian products if
at least one of the factors has a triangle is unlikely. So we focus on Cartesian products of connected factors
that have girth larger than $3$.

In this section we will prove the following characterization of well-covered Cartesian products of two connected graphs
that each have girth at least 5.
\begin{thm} \label{thm:girths5}
Let $G$ and $H$ be nontrivial, connected graphs, both of which have girth at least $5$.  The Cartesian product
$G \Box H$ is well-covered if and only if  $G \Box H \cong K_2 \Box K_2$ or  $G \Box H \cong C_5 \Box K_2$.
\end{thm}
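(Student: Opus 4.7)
The ``if'' direction is immediate: $K_2 \Box K_2 \cong C_4$ has all maximal independent sets of size $2$, and a short check confirms every maximal independent set of $C_5 \Box K_2$ has size $4$. For the converse, I assume $G \Box H$ is well-covered with both $G$ and $H$ nontrivial, connected, and of girth at least $5$. The main girth-$\geq 4$ result of this section (proved earlier) forces one of the factors to be $K_2$, so I may assume $G \cong K_2$. The task then reduces to showing that when $H$ is nontrivial, connected, has girth at least $5$, and $H \Box K_2$ is well-covered, we have $H \cong K_2$ or $H \cong C_5$.

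The high-level plan is to invoke Theorem~\ref{thm:girth5noisolates}: it suffices to show that $H$ is well-covered and has no isolatable vertex. Since $g(H) \geq 4$, Theorem~\ref{thm:noisolatable} collapses this into the single goal of proving that $H$ has no isolatable vertex. Suppose for contradiction that some $v \in V(H)$ is isolatable, so that there is an independent set $J \subseteq V(H) \setminus \{v\}$ with $V(H) - N_H[J] = \{v\}$. I intend to produce two maximal independent sets of $H \Box K_2$ of different sizes, contradicting well-coveredness.

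A natural first attempt uses the independent sets
\[
I_1 \;=\; J^1 \cup \{v^1\} \qquad \text{and} \qquad I_2 \;=\; J^1 \cup \{v^2\},
\]
both of size $|J|+1$. A direct computation, using $V(H) = N_H[J] \cup \{v\}$, shows that $(H \Box K_2) - N[I_1]$ lies entirely in the layer-$2$ copy and is isomorphic to $H[N_H(J)]$, and that $(H \Box K_2) - N[I_2]$ is isomorphic to $H[N_H(J) \setminus N_H(v)]$. By Lemma~\ref{lem:basic}, the assumed well-coveredness of the prism forces
\[
\alpha\bigl(H[N_H(J)]\bigr) \;=\; \alpha\bigl(H[N_H(J) \setminus N_H(v)]\bigr).
\]

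The main obstacle is to convert this equality (or, when it holds nontrivially, an augmented system of similar equalities obtained by varying the maximal independent set of $H$ containing $v$) into a contradiction under $g(H) \geq 5$. The intended argument exploits the girth hypothesis: $N_H(v)$ is itself independent, each $u \in N_H(v)$ has a unique neighbor in $J$ (a second neighbor would yield a $4$-cycle through $v$), and such local rigidity restricts how a maximum independent set of $H[N_H(J)]$ can entirely avoid the nonempty set $N_H(v)$. Should the displayed equality fail to deliver a contradiction for a particular configuration, the backup is to compare maximal extensions of $K^1$ and $(K')^1$ in the prism, where $K = J \cup \{v\}$ and $K'$ is a second maximal independent set of $H$ containing $v$ whose existence is forced by the girth condition; well-coveredness then demands $|K| + \alpha(H[V(H) - K]) = |K'| + \alpha(H[V(H) - K'])$, and the girth condition should rule this out. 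Once $H$ is shown to have no isolatable vertex, Theorems~\ref{thm:noisolatable} and~\ref{thm:girth5noisolates} deliver $H \cong K_2$ or $H \cong C_5$, completing the proof.
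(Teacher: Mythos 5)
Your skeleton is exactly the paper's: use the girth-$\ge 4$ reduction (Corollary~\ref{cor:mustbeprism}) to force a prism $H\Box K_2$, then argue that $H$ has no isolatable vertex, and finish with Theorems~\ref{thm:noisolatable} and~\ref{thm:girth5noisolates}. The gap is that the middle step --- if $g(H)\ge 5$ and $H$ has an isolatable vertex then $H\Box K_2$ is not well-covered --- is the entire content of the paper's Theorem~\ref{thm:girth5isolatable}, which occupies a long case analysis, and your proposal does not actually prove it. Your first attempt, deducing $\alpha\bigl(H[N_H(J)]\bigr)=\alpha\bigl(H[N_H(J)\setminus N_H(v)]\bigr)$ from the two independent sets $J^1\cup\{v^1\}$ and $J^1\cup\{v^2\}$, is computed correctly but genuinely fails to yield a contradiction in some configurations. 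Concretely, take $V(H)=\{v,u_1,u_2,w_1,w_2,j_1,j_2\}$ with edges $vu_1$, $vu_2$, $u_1j_1$, $u_1w_1$, $u_2j_2$, $u_2w_2$, $w_1j_2$, $w_2j_1$. This graph is connected with girth $5$, and $J=\{j_1,j_2\}$ isolates $v$; here $H[N_H(J)]$ is the two disjoint edges $u_1w_1$ and $u_2w_2$ while $H[N_H(J)\setminus N_H(v)]=H[\{w_1,w_2\}]$ is edgeless, so both independence numbers equal $2$ and no contradiction arises. Thus the ``main obstacle'' you flag is not a loose end but the theorem itself, and the ``backup'' (comparing extensions of two maximal independent sets containing $v$, with the assertion that ``the girth condition should rule this out'') is not an argument.

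For comparison, the paper's proof of Theorem~\ref{thm:girth5isolatable} does not work with independence numbers of induced subgraphs at all. It splits into cases according to the structure of $N(x)$ and the sets $A_i$ of second neighbors ($|A_1|=1$; some $A_i$ dominated by a maximal independent set $I$ of $G-N[\{y_1,\dots,y_k\}]$; two undominated vertices in some $A_i$; exactly one undominated vertex in each $A_i$), and in every case it exhibits an explicit independent set $S$ of the prism such that $G\Box K_2-N[S]$ contains a strong support vertex, whence Lemma~\ref{lem:basic} applies. You would need to supply an analysis of comparable completeness to close your proof.
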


We first prove through a series of reductions that if
$G$ and $H$ are both nontrivial, connected, triangle-free graphs
such that $G \Box H$ is well-covered, then $G \Box H$ is a prism.

\begin{lemma} \label{lem:girths4}
Let $G$ and $H$ be connected graphs, both of which have order at least $3$ and girth at least
$4$.  If either $G$ or $H$ has an isolatable vertex having degree at least $2$, then $G \Box H$ is not well-covered.
\end{lemma}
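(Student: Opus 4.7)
The plan is to apply Lemma~\ref{lem:basic}: find an independent set $I$ in $G\Box H$ whose removal leaves a non-well-covered induced subgraph. Without loss of generality, assume $G$ has an isolatable vertex $g$ with $\deg_G(g)\ge 2$; let $J$ be an independent set in $G$ with $V(G)\setminus N_G[J]=\{g\}$, and note that any neighbor of $g$ must lie in $N_G(J)$. Since $g(G)\ge 4$, any two distinct neighbors $g_1,g_2\in N_G(g)$ are non-adjacent. Fix a maximal independent set $L$ of $H$ and some $h^*\in V(H)\setminus L$ (such an $h^*$ exists since $H$ is nontrivial and $L$ cannot be all of $V(H)$). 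Then $I:=J\times L$ is independent in $G\Box H$, and a routine computation gives
\[
S\;:=\;V(G\Box H)\setminus N[I]\;=\;(\{g\}\times V(H))\,\cup\,(N_G(J)\times(V(H)\setminus L)).
\]

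The critical structural feature of $S$ is the star centered at $w:=(g,h^*)$. Its $S$-neighborhood
\[
T\;:=\;N_S(w)\;=\;(\{g\}\times N_H(h^*))\,\cup\,(N_G(g)\times\{h^*\})
\]
is independent in $S$: the first piece is independent because $N_H(h^*)$ is independent in $H$ by $g(H)\ge 4$; the second is independent by the analogous reasoning in $G$; and cross-pairs differ in both coordinates. Moreover $|T|=\deg_H(h^*)+\deg_G(g)\ge 1+2=3$, using that $H$ is connected and $\deg_G(g)\ge 2$. Thus the subgraph induced on $\{w\}\cup T$ is a copy of $K_{1,|T|}$ with at least three leaves, which in isolation already admits maximal independent sets of differing sizes ($\{w\}$ of size $1$ versus $T$ of size $\ge 3$).

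The strategy is to exhibit two maximal independent sets of $G\Box H[S]$ of different sizes that inherit this local imbalance. Define $A:=T\cup A'$, where $A'$ is a maximal independent set of $G\Box H[S\setminus N_S[T]]$, and $B:=\{w\}\cup B'$, where $B'$ is a maximal independent set of $G\Box H[S\setminus N_S[w]]$; both are maximal independent sets of $S$, of sizes $|T|+|A'|$ and $1+|B'|$ respectively. Since $N_S[w]=\{w\}\cup T\subseteq N_S[T]$, any such $A'$ remains independent in the larger ambient subgraph and can be extended to a $B'$ by adjoining at most $|N_S(T)\setminus\{w\}|$ further vertices. In the ideal case where $N_S(T)=\{w\}$ (which holds, for example, when $L$ is chosen to contain $N_H(h^*)$ and each $g_i\in N_G(g)$ has no neighbor in $N_G(J)$ beyond $g$ itself), every vertex of $T$ is a pendant on $w$ within $S$, so $B'=A'$ and $|A|-|B|=|T|-1\ge 2$, establishing the result. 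The main obstacle is handling the general situation, where $N_S(T)\setminus\{w\}$ may be nonempty and the extension from $A'$ to $B'$ might add up to $|T|-1$ vertices, in which case the sizes could a priori coincide. To overcome this, one refines the choice of $L$ (e.g. extending $N_H(h^*)$, which is independent by $g(H)\ge 4$, to a maximal independent set) and uses the girth-$4$ hypothesis on $G$ repeatedly to control $N_S(T)$; if needed, one iterates Lemma~\ref{lem:basic} by adjoining an auxiliary independent set to $I$ to first strip the offending outside-neighbors of $T$ before comparing $|A|$ and $|B|$.
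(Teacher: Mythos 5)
There is a genuine gap, and you in effect acknowledge it yourself. Your setup is fine: the computation of $S = G\Box H - N[J\times L]$ is correct, as is the identification of the star $\{w\}\cup T$ with $|T|\ge 3$. But the star is never ``in isolation'': the entire layer $\{g\}\times V(H)\cong H$ survives inside $S$, so each vertex $(g,b)\in T$ retains its neighbors $(g,c)$ for $c\in N_H(b)\setminus\{h^*\}$, and the vertices $(a,h^*)\in T$ retain neighbors in $N_G(J)\times(V(H)\setminus L)$. Your ``ideal case'' $N_S(T)=\{w\}$ essentially never occurs when $|V(H)|\ge 3$, and the comparison $|A|$ versus $|B|$ is exactly the point where the proof has to do real work. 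Saying that one ``refines the choice of $L$'' or ``iterates Lemma~\ref{lem:basic} by adjoining an auxiliary independent set to first strip the offending outside-neighbors of $T$'' is a restatement of the problem, not a solution: you would need to exhibit that auxiliary independent set explicitly and verify both its independence and that it removes all of $N_S(T)\setminus\{w\}$ without also removing the leaves you are counting on.

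The paper's proof does precisely this missing construction, and it is worth seeing why the hypotheses enter where they do. It fixes $s\in V(H)$ with two neighbors $t_1,t_2$ (order of $H$ at least $3$), and builds a single independent set $J$ in $G\Box H$ that dominates \emph{every} neighbor of $(x,t_1)$ and $(x,t_2)$ other than $(x,s)$: the part $I\times\{t_1,t_2\}$ kills the neighbors $(y,t_i)$ with $y\in N_G(x)$, while the parts $\{y_1\}\times A$ and $\{y_2\}\times B$ (with $A,B$ partitioning $N_H(\{t_1,t_2\})-\{s\}$) kill the neighbors $(x,u)$ with $u\in N_H(t_i)-\{s\}$. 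This is where the degree hypothesis $\deg_G(x)\ge 2$ is actually used --- two distinct neighbors $y_1,y_2$ of $x$ are needed so that the two domination sets stay independent of each other --- whereas in your argument that hypothesis only shows $|T|\ge 3$ and then plays no further role. The conclusion is then immediate: $(x,s)$ is a strong support vertex with leaves $(x,t_1),(x,t_2)$ in $G\Box H-N[J]$, and a graph with a strong support vertex is not well-covered. Targeting a strong support vertex, rather than trying to compare two maximal independent sets of all of $S$, is what makes the interference from the rest of the residual graph irrelevant; your approach would need that same idea carried out in full to close the gap.
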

\proof
Assume without loss of generality that $G$ has an isolatable vertex $x$ and let
$y_1$ and $y_2$ be distinct neighbors of $x$.  Let $I$ be an independent set in $G$ such that $G-N[I]=\{x\}$.  Since $H$ has order
at least $3$, we fix a vertex $s$ in $H$ with distinct neighbors $t_1$ and $t_2$.  We assume without loss of generality
that $\deg_H(t_1) \le \deg_H(t_2)$.  If $\deg_H(t_1)=1= \deg_H(t_2)$, then let $J=I \times \{t_1,t_2\}$.  If
$1=\deg_H(t_1)< \deg_H(t_2)$, then let $J=(I \times \{t_1,t_2\}) \cup (\{y_1\} \times (N_H(t_2)-\{s\}))$.  Finally,
suppose that $1 < \deg_H(t_1)$.  Let  $T=N_H(\{t_1,t_2\})-\{s\}$, let  $A=T-N_H(t_2)$, let $B=T-A$, and let
$J=(I \times \{t_1,t_2\}) \cup (\{y_1\} \times A) \cup ( \{y_2\} \times B)$.

Since $H$ is triangle-free and because of the definition of $I$, we see that all three cases above $J$ is
independent in $G \Box H$.  In addition, the vertex $(x,s)$ is a strong support vertex adjacent to leaves
$(x,t_1)$ and $(x,t_2)$ in $G \Box H -N[J]$. Therefore, by Lemma~\ref{lem:basic} it follows
that $G \Box H$ is not well-covered.  \qed

The following corollary follows immediately from Lemma~\ref{lem:girths4}, Theorem~\ref{thm:extnotisolatable},
and Theorem~\ref{thm:noisolatable}.

\begin{cor} \label{cor:1-wellcoveredfactors}
Let $G$ and $H$ be connected graphs, both of which have  minimum degree
at least $2$ and girth at least $4$.  If $G \Box H$ is well-covered, then
both $G$ and $H$ are $1$-well-covered.
\end{cor}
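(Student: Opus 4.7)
The plan is to chain together the three preceding results in a short, direct way. Since $G\Box H$ is well-covered by hypothesis, Lemma~\ref{lem:girths4} tells us that no vertex of degree at least $2$ in either $G$ or $H$ can be isolatable; combined with the minimum degree assumption (every vertex has degree at least $2$), this means that neither $G$ nor $H$ contains any isolatable vertex at all.

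Next, because both factors have girth at least $4$ and no isolatable vertex, Theorem~\ref{thm:noisolatable} applies to each, yielding that $G$ and $H$ are each well-covered. Thus both factors are individually well-covered graphs in which no vertex is isolatable.

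Finally, Theorem~\ref{thm:extnotisolatable} identifies extendable vertices with non-isolatable ones in any well-covered graph. Applying this to $G$ and $H$ separately, every vertex of $G$ is extendable in $G$ and every vertex of $H$ is extendable in $H$, which is exactly the definition of $1$-well-covered. This gives the corollary.

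There is essentially no obstacle here: all three ingredients are already in place and the argument is a straightforward logical composition. The only thing to be mindful of is invoking Lemma~\ref{lem:girths4} symmetrically in both factors and explicitly noting how the minimum degree hypothesis upgrades ``no isolatable vertex of degree at least $2$'' to ``no isolatable vertex'', so that Theorem~\ref{thm:noisolatable} can be applied cleanly.
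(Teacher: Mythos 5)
Your proposal is correct and is precisely the chain the paper intends: the paper states only that the corollary ``follows immediately'' from Lemma~\ref{lem:girths4}, Theorem~\ref{thm:extnotisolatable}, and Theorem~\ref{thm:noisolatable}, and you have filled in that chain accurately (noting that minimum degree at least $2$ together with girth at least $4$ guarantees the order-at-least-$3$ hypothesis of Lemma~\ref{lem:girths4}).
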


Since a leaf in a graph of order at least 3 is isolatable,
we are now able to weaken the hypothesis of Lemma~\ref{lem:girths4} to cover the case when one of the
factors has an isolatable vertex of degree 1 while making no assumption about the order of the other factor.

\begin{lemma} \label{lem:girth4isolatable}
Let $G$ and $H$ be nontrivial, connected graphs both having girth at least $4$.  If $G$ has minimum degree $1$ and order at least $3$,
then $G \Box H$ is not well-covered.
\end{lemma}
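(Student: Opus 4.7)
The plan is to invoke Lemma~\ref{lem:basic} by producing an independent set $J$ in $G\Box H$ such that $(G\Box H)-N[J]$ has a strong support vertex. Fix a leaf $\ell$ of $G$ with support vertex $p$; since $|V(G)|\ge 3$ and $G$ is connected, $p$ has another neighbor $q\in N_G(p)\setminus\{\ell\}$. Because $g(G)\ge 4$, the set $N_G(p)\setminus\{\ell\}$ is independent in $G$, and I extend it to a maximal independent set $I$ of $G-\ell$. One checks that $I$ is independent in $G$, avoids both $\ell$ and $p$, contains $N_G(p)\setminus\{\ell\}$, and satisfies $V(G)-N_G[I]=\{\ell\}$.

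For $H=K_2$ I take $J=I\times\{1\}$. The closed neighborhood is $N[J]=(V(G)\setminus\{\ell\})\times\{1\}\cup I\times\{2\}$, and in the reduced graph $\ell^1$ has only $\ell^2$ as a neighbor (its $G$-partner $p^1$ was removed), while $p^2$ also has only $\ell^2$ as a neighbor (each $u^2$ with $u\in N_G(p)\setminus\{\ell\}\subseteq I$ is removed, as is $p^1$). Thus $\ell^2$ is a strong support vertex of the reduced graph, and this case is settled by Lemma~\ref{lem:basic}.

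For $|V(H)|\ge 3$ I would mirror the three-case scheme of the proof of Lemma~\ref{lem:girths4}. Pick $v\in V(H)$ with $\deg_H(v)\ge 2$ and distinct $v_1,v_2\in N_H(v)$; the girth hypothesis forces $v_1v_2\notin E(H)$ and makes each $N_H(v_i)$ independent in $H$. Start from $J_0=I\times\{v_1,v_2\}$, whose closed neighborhood already deletes $(p,v_1)$ and $(p,v_2)$ via the elements $(q,v_1),(q,v_2)\in J_0$ while keeping $(\ell,v),(\ell,v_1),(\ell,v_2)$ alive. If $\deg_H(v_1)=\deg_H(v_2)=1$, no enlargement is needed and $(\ell,v_1),(\ell,v_2)$ become leaves in the reduction with common support $(\ell,v)$. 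If exactly one of them, say $v_1$, has degree $1$, I adjoin $\{p\}\times(N_H(v_2)\setminus\{v\})$; independence is preserved (the new set is independent by girth, and none of its elements equals $v_1$ or $v_2$, else a triangle would arise), and the addition kills the remaining $\ell$-layer neighbors of $(\ell,v_2)$, again making $(\ell,v)$ a strong support vertex.

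The main obstacle is the sub-case $\deg_H(v_1),\deg_H(v_2)\ge 2$. The corresponding step of Lemma~\ref{lem:girths4} splits $T=N_H(\{v_1,v_2\})\setminus\{v\}$ into sets $A\subseteq N_H(v_1)$ and $B\subseteq N_H(v_2)$ and adjoins $\{y_1\}\times A\cup\{y_2\}\times B$, using two distinct neighbors $y_1,y_2$ of the isolatable vertex; here $\ell$ has only the one neighbor $p$, and a naive copy $\{p\}\times T$ need not be independent because $H$-edges can run between $N_H(v_1)$ and $N_H(v_2)$. My plan to get around this is reductive: if $H$ has any leaf then rechoosing $v$ to be the support of such a leaf returns us to a previously handled sub-case, so I may assume $H$ has minimum degree at least $2$. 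In that regime any isolatable vertex of $H$ automatically has degree at least $2$, so Lemma~\ref{lem:girths4} applies directly to $G\Box H$ whenever $H$ has any isolatable vertex at all. In the remaining scenario ($H$ has minimum degree at least $2$ and no isolatable vertex), Theorem~\ref{thm:noisolatable} forces $H$ to be well-covered, and I would finish by constructing two maximal independent sets of $(G\Box H)-N[I\times\{v\}]$ of different sizes, exploiting the fact that the $\ell$-layer survives intact as a copy of $H$ attached to the rest of the reduced graph only through the edges $(\ell,h)(p,h)$.
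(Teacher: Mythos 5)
Your argument is complete and correct for $H=K_2$ and for the sub-cases in which one of the chosen neighbors $v_1,v_2$ of $v$ has degree $1$, and your reductions (re-choosing $v$ to be the support of a leaf of $H$, then invoking Lemma~\ref{lem:girths4} when $H$ has an isolatable vertex of degree at least $2$) are legitimate since Lemma~\ref{lem:girths4} precedes this one. But the proof has a genuine gap at exactly the case you flag as the main obstacle: when $|V(H)|\ge 3$, $\delta(H)\ge 2$, and $H$ has no isolatable vertex. There you only state an intention --- ``I would finish by constructing two maximal independent sets of $(G\Box H)-N[I\times\{v\}]$ of different sizes'' --- without producing them, and it is not at all evident that this particular reduced graph fails to be well-covered. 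This is not a fringe case: it is the generic one, covering for instance $H=C_5$ or $H=WL_8$ (any $1$-well-covered $H$ of girth at least $4$ with $\delta(H)\ge 2$), so the lemma is unproved for a large family of factors.

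The paper's proof avoids this obstacle by a different choice of which two vertices are to become leaves. With $x$ the leaf of $G$, $y$ its support, $I=N_G(y)-\{x\}$, $s\in V(H)$ arbitrary, and $A=N_H(t_1)-\{s\}$ for a single neighbor $t_1$ of $s$ (with $A=\emptyset$ if $\deg_H(t_1)=1$), the set $J=(I\times N_H(s))\cup(\{y\}\times A)$ is independent and makes $(x,s)$ a strong support vertex with leaves $(y,s)$ and $(x,t_1)$. The point is that one leaf lives in the support row and one in the leaf row, so only the single neighborhood $N_H(t_1)-\{s\}$ must be covered, and it can be parked in the row of $y$; your scheme, modeled on Lemma~\ref{lem:girths4}, tries to turn two vertices $(\ell,v_1),(\ell,v_2)$ of the $\ell$-row into leaves, which requires two disjoint rows to hold the sets covering $N_H(v_1)-\{v\}$ and $N_H(v_2)-\{v\}$ --- and $\ell$ has only the one neighbor $p$. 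Redirecting your construction to target the pair $(p,v),(\ell,v_1)$ instead of $(\ell,v_1),(\ell,v_2)$ would close the gap and in fact collapses all of your sub-cases into one.
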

\proof
Assume that $x$ is a leaf in $G$ that is adjacent to a support vertex $y$, and assume that $N_G(y)=\{x,z_1,\ldots,z_r\}$
where $r\ge 1$.  Let $I=\{z_1,\ldots,z_r\}$.  Note that since $g(G) \ge 4$, $I$ is independent in $G$ and $x$ is isolated in $G-N[I]$.
Fix any vertex $s$ of $H$ and assume that $N_H(s)=\{t_1,\ldots,t_m\}$.  If $\deg_H(t_1)=1$, then let $A=\emptyset$; otherwise, let
$A=N_H(t_1)-\{s\}$.  Let $J=(I \times N_H(s)) \cup (\{y\} \times A)$.  Since both $G$ and $H$ have girth at least $4$, it follows
that $J$ is independent in $G \Box H$.  The vertex $(x,s)$ is a strong support vertex in $G \Box H -N[J]$, adjacent to leaves
$(y,s)$ and $(x,t_1)$.  By Lemma~\ref{lem:basic} we conclude that $G\Box H$ is not well-covered.  \qed

We now make the final reduction in the case when both nontrivial, connected factors have girth at least 4.  Here we do not
assume that one of the factors has an isolatable vertex.

\begin{lemma}  \label{lem:girthge4-order3}
If $G$ and $H$ are connected graphs both having order at least $3$ and girth at least $4$, then $G \Box H$ is not well-covered.
\end{lemma}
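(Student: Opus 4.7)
The plan is to use the three preceding reductions—Lemmas~\ref{lem:girths4} and~\ref{lem:girth4isolatable}, together with Corollary~\ref{cor:1-wellcoveredfactors}—to assume that both $G$ and $H$ are $1$-well-covered with minimum degree at least $2$ (and of course connected, of girth at least $4$, of order at least $3$). In this remaining case, neither factor has a leaf or an isolatable vertex, so Theorems~\ref{thm:noisolatable} and~\ref{thm:extnotisolatable} apply in full. The goal is then to exhibit an independent set $J \subseteq V(G \Box H)$ whose closed neighborhood, when removed, leaves a subgraph with a strong support vertex—hence not well-covered; Lemma~\ref{lem:basic} then forces $G \Box H$ itself not to be well-covered.

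Pick $x \in V(G)$ with distinct neighbors $y_1, y_2 \in N_G(x)$, non-adjacent by triangle-freeness of $G$, and $s \in V(H)$ with distinct neighbors $t_1, t_2 \in N_H(s)$, non-adjacent by triangle-freeness of $H$. Choose $u_1 \in N_H(t_1) \setminus \{s\}$ and $u_2 \in N_H(t_2) \setminus \{s\}$, which exist since $\delta(H) \ge 2$; triangle-freeness of $H$ then forces $u_i \notin N_H[s]$. Assuming first that $\deg_H(s) = 2$, set
\[
J \;=\; \bigl((N_G(y_1) \setminus \{x\}) \times \{t_1\}\bigr) \;\cup\; \bigl((N_G(y_2) \setminus \{x\}) \times \{t_2\}\bigr) \;\cup\; \{(y_1, u_2),\,(y_2, u_1)\}.
\]
A direct case analysis—using triangle-freeness in both factors (so each $N_G(y_i)$ and $N_H(s)$ is independent), the fact that $y_1 y_2 \notin E(G)$, and $u_i \notin \{s, t_1, t_2\}$—shows that $J$ is independent in $G \Box H$ and disjoint from $N[\{(x,s), (y_1,s), (y_2,s)\}]$. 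Then every $G \Box H$-neighbor of $(y_i, s)$ other than $(x, s)$ lies in $N[J]$: a side-neighbor $(z, s)$ with $z \in N_G(y_i) \setminus \{x\}$ is dominated by $(z, t_i) \in J$; the column-neighbor $(y_i, t_i)$ is dominated by any $(y', t_i) \in J$ with $y' \in N_G(y_i) \setminus \{x\}$, which exists since $\delta(G) \ge 2$; and $(y_i, t_{3-i})$ is dominated by $(y_i, u_{3-i}) \in J$. Thus $(x, s)$ is a strong support vertex in $(G \Box H) - N[J]$ with leaves $(y_1, s)$ and $(y_2, s)$, concluding this subcase.

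The main obstacle is the case $\deg_H(s) \ge 3$: one must also dominate the column-neighbors $(y_i, t)$ for each $t \in N_H(s) \setminus \{t_1, t_2\}$, which forces enlarging $J$ with further elements of the form $(y_i, u_t)$ where $u_t \in N_H(t) \setminus N_H[s]$, while preserving independence. The delicate step is to choose the $u_t$'s so that $\{u_t\}$ is independent in $H$; this uses the structure of $1$-well-covered triangle-free graphs, for example by extracting the $u_t$'s from a maximum independent set of $H - s$ (which has size $\alpha(H)$ by $1$-well-coveredness of $H$ and necessarily meets $N_H(s)$, since otherwise $s$ could be added to produce a larger independent set of $H$). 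With this refinement the same strong support argument completes the proof.
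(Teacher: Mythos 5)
Your reduction to the case where both factors are $1$-well-covered with minimum degree at least $2$ is valid, and your construction for the subcase $\deg_H(s)=2$ checks out: the set $J$ is independent, and $(x,s)$ does become a strong support vertex with leaves $(y_1,s)$ and $(y_2,s)$, so Lemma~\ref{lem:basic} applies. (The assignment of $u_2$ to $y_1$ and $u_1$ to $y_2$ is a nice touch, since it makes no demand that $u_1$ and $u_2$ be nonadjacent.) However, the case $\deg_H(s)\ge 3$ is not proved, and it cannot be dismissed: you may always pick $s$ of minimum degree, but nothing you have established rules out both factors having minimum degree at least $3$ at this stage, so the argument is incomplete exactly where you flag the ``main obstacle.'' Moreover, the repair you sketch does not work as stated: a maximum independent set $M$ of $H-s$ need not contain, for each $t\in N_H(s)$, a neighbor of $t$ lying outside $N_H[s]$ --- for instance $M$ may simply contain $t$ itself (indeed $N_H(s)$ is independent, so $M$ could contain all of $N_H(s)$), in which case it supplies no usable $u_t$. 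Even the more natural choice, a maximal independent set $I_H$ of $H-N_H[s]$, only guarantees (via well-coveredness and non-isolatability of $s$, as in the paper's treatment of the $G$-factor) that $H-N_H[I_H]$ is a single edge $st_1$; this hands you independent vertices $u_{t_j}\in N_H(t_j)\cap I_H$ for $j\ge 2$ but leaves $t_1$ undominated from outside $N_H[s]$, so the column-neighbors $(y_i,t_1)$ still need separate care. The required selection of pairwise nonadjacent $u_t$'s is a genuine list-choice problem, and you have not shown it is always solvable.

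The paper's proof avoids this difficulty entirely by making the construction asymmetric: the strong support vertex is $(y,s_1)$ with one leaf $(y,s_2)$ reached in the $H$-direction and one leaf $(x,s_1)$ reached in the $G$-direction. The ``isolate an edge'' trick (take a maximal independent set $I$ of $G-N[y]$ and use well-coveredness plus non-isolatability to force $G-N[I]=\{x,y\}$) is applied only in the factor $G$, and the entire neighborhood structure around $s_1$ and $s_2$ in $H$ is then swept up by the sets $N_G(y)\times\{t_1,\ldots,t_m\}$ and $\{w\}\times\{z_1,\ldots,z_r\}$, whose independence comes for free from triangle-freeness of the factors rather than from any choice of representatives in $H$. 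If you want to keep both leaves in the same $G$-layer, you must either solve the independent-representative problem above or restructure the argument along these lines; as written, your proof establishes the lemma only when one of the factors has a vertex of degree $2$.
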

\proof
Assume that $G$ and $H$ are both of order at least $3$ and girth at least $4$.  Suppose, in order to arrive at a
contradiction, that $G \Box H$ is well-covered.  By Lemma~\ref{lem:girths4} and Lemma~\ref{lem:girth4isolatable}, neither
$G$ nor $H$ has an isolatable vertex, and hence $\delta(G) \ge 2$ and $\delta(H) \ge 2$.  In addition, it follows from
Theorem~\ref{thm:noisolatable} that both $G$ and $H$ are well-covered.  Fix a vertex $s_2$ in $H$, and distinguish one of
its neighbors, say $s_1$.  Let $N_H(s_2)=\{s_1,t_1,\ldots,t_m\}$ for some $m \ge 1$ and
let $N_H(s_1)=\{s_2,z_1,\ldots,z_r\}$ for some $r \ge 1$.  Select a vertex $y$ in $G$,
and let $I$ be a maximal independent set of $G-N[y]$.  Since $y$ is not isolatable in $G$, the graph $G-N[I]$ contains at
least one neighbor $x$ of $y$.  Furthermore, since $g(G) \ge 4$ and $G$ is well-covered, it follows from Lemma~\ref{lem:basic}
that $G -N[I]$ contains exactly the  two adjacent vertices $x$ and $y$.  Let $w \in N_G(x)-\{y\}$ and let
\[J=(I \times \{s_1\}) \cup (N_G(y) \times \{t_1,\ldots,t_m\}) \cup (\{w\} \times \{z_1,\ldots,z_r\})\,.\]
We note that $I$ was chosen to be independent; $N_G(y) \times \{t_1,\ldots,t_m\}$ is independent since both $G$ and $H$
are triangle-free; and $\{w\} \times \{z_1,\ldots,z_r\}$ is independent since $g(H) \ge 4$.  As a result, $J$ is independent by
the definition of the edge set of $G \Box H$.  However, by appealing to Lemma~\ref{lem:basic}, we now arrive at a contradiction
since $G \Box H-N[J]$ has a strong support vertex $(y,s_1)$ that is adjacent to the leaves $(y,s_2)$ and $(x,s_1)$.
Consequently, $G \Box H$ is  not well-covered. \qed

\begin{cor} \label{cor:mustbeprism}
If $G$ and $H$ are nontrivial, connected graphs with girth at least $4$ such that
$G \Box H$ is well-covered, then at least one of $G$ or $H$ is the graph $K_2$.
\end{cor}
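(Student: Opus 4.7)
The plan is to deduce Corollary~\ref{cor:mustbeprism} almost immediately from Lemma~\ref{lem:girthge4-order3}. The key observation is that a nontrivial connected graph has order at least $2$, and the only connected graph on exactly $2$ vertices is $K_2$ (whose girth is $\infty\ge 4$, so no extra hypothesis needs checking). Thus the hypothesis that neither factor is $K_2$, combined with connectedness and nontriviality, forces each factor to have order at least $3$.

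I would proceed by contrapositive. Assume $G$ and $H$ are nontrivial, connected graphs of girth at least $4$, and assume that \emph{neither} is $K_2$. Then $|V(G)|\ge 3$ and $|V(H)|\ge 3$. Both hypotheses of Lemma~\ref{lem:girthge4-order3}, namely connectedness, order at least $3$, and girth at least $4$, apply to $G$ and $H$ simultaneously. Lemma~\ref{lem:girthge4-order3} then directly gives that $G\Box H$ is not well-covered. Taking the contrapositive yields the statement: whenever $G \Box H$ is well-covered (with the standing girth/nontriviality/connectedness assumptions), at least one of $G$ or $H$ must be $K_2$.

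There is really no obstacle here; the corollary is a straightforward case analysis packaged on top of Lemma~\ref{lem:girthge4-order3}. The only point one must be a little careful about is the girth-at-least-$4$ requirement on $K_2$ itself, which is fine because the girth of a forest is defined to be $\infty$ in the preliminaries. Thus no auxiliary lemma, no new independent-set construction, and no appeal to Lemma~\ref{lem:basic} is needed at this stage; the work has already been done by the three reduction lemmas of the section.
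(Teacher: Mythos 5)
Your proposal is correct and is exactly the argument the paper intends: the corollary is stated without proof as an immediate consequence of Lemma~\ref{lem:girthge4-order3}, using precisely the observation that a nontrivial connected graph other than $K_2$ has order at least $3$. Nothing further is needed.
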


Because of Corollary~\ref{cor:mustbeprism}, for the remainder of the paper we restrict ourselves
to prisms of graphs that have girth at least 4.  The following corollary, concerning the prism of certain graphs, follows directly
from Lemma~\ref{lem:girth4isolatable}.

\begin{cor} \label{cor:leaves}
If a connected graph $G$ has girth at least 4, has order at least 3 and has minimum degree 1, then $G\Box K_2$
is not well-covered.
\end{cor}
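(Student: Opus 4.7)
The plan is to observe that Corollary~\ref{cor:leaves} is nothing more than the specialization of Lemma~\ref{lem:girth4isolatable} to the case $H = K_2$, so the proof should just amount to verifying that the hypotheses of the lemma are satisfied in this setting and invoking it.

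First I would check that $K_2$ qualifies as a valid choice for the second factor in Lemma~\ref{lem:girth4isolatable}: it is connected, nontrivial (it has two vertices), and since it contains no cycles its girth is $\infty$, which is in particular at least $4$. The graph $G$ in the statement of Corollary~\ref{cor:leaves} is assumed to be connected, to have girth at least $4$, to have order at least $3$ (so in particular it is nontrivial), and to have minimum degree $1$. These are exactly the conditions imposed on the first factor in Lemma~\ref{lem:girth4isolatable}.

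With the hypotheses verified, I would apply Lemma~\ref{lem:girth4isolatable} with this $G$ and with $H = K_2$ to conclude immediately that $G \Box K_2$ is not well-covered. There is no real obstacle here; the only thing one needs to be careful about is that the convention ``girth at least $4$'' accommodates the acyclic case (which is the one relevant for $K_2$), and this is guaranteed by the paper's earlier convention that the girth of a forest is $\infty$.
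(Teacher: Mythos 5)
Your proposal is correct and is exactly the paper's approach: the paper derives Corollary~\ref{cor:leaves} directly from Lemma~\ref{lem:girth4isolatable} by taking $H=K_2$, and your verification of the hypotheses (including the convention that a forest has girth $\infty \ge 4$) is all that is needed.
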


Recall from Theorem~\ref{thm:noisolatable} that a graph is well-covered if it has girth at least $4$ and has no isolatable vertices. We next ask
whether a graph $G$ with an isolatable vertex can have a well-covered prism. The next result shows the answer is no if
$G$ has girth at least $5$.

\begin{thm} \label{thm:girth5isolatable}
If $G$ has girth at least $5$ and has an isolatable vertex, then $G\Box K_2$ is not well-covered.
\end{thm}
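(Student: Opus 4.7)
The plan is to apply Lemma~\ref{lem:basic}: I will exhibit an independent set $J$ of $G\Box K_2$ such that $(G\Box K_2)-N[J]$ fails to be well-covered, typically by containing a strong support vertex. A preliminary reduction disposes of the case $\delta(G)=1$: if $G$ has a leaf, then $|V(G)|\geq 3$ (the graph $K_2$ has no isolatable vertex, so the hypothesis rules it out), and Corollary~\ref{cor:leaves} applies directly. So assume $\delta(G)\geq 2$, and let $I$ be an independent set of $G$ with $V(G)\setminus N_G[I]=\{x\}$. Pick distinct $y_1,y_2\in N_G(x)$, which exist since $\deg_G(x)\geq 2$; the girth hypothesis gives $y_1y_2\notin E(G)$ and $N_G(y_1)\cap N_G(y_2)=\{x\}$, and $N_G(x)\cap I=\emptyset$, so each $y_j$ has some neighbour $i_j\in I\cap N_G(y_j)$, and $i_1\neq i_2$.

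I will take $J=I^1\cup A^2$, where $A\subseteq V(G)\setminus(I\cup N_G[x])$ is an independent set chosen so that $x^2$ becomes a strong support of the residual $(G\Box K_2)-N[J]$ with leaves $x^1$ and $y_1^2$. Because $I^1$ already dominates all of layer one except $x^1$, and because $A\cap N_G[x]=\emptyset$ keeps $x^2$ outside $N[J]$, the vertex $x^1$ is automatically a leaf of $x^2$. For $y_1^2$ to also be a leaf of $x^2$, every $z\in T:=N_G(y_1)\setminus(\{x\}\cup I)$ must have $z^2$ dominated by $A^2$, while $y_1^2$ itself remains undominated. The girth condition is what makes this achievable: each $z\in T$ is non-adjacent to $x$ (else a triangle $xy_1z$), every $w\in N_G(z)\setminus\{y_1\}$ is non-adjacent to $y_1$ (else a triangle $y_1zw$), and distinct elements of $T$ are pairwise non-adjacent (else a triangle through $y_1$). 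In the generic case, picking for each $z\in T$ a suitable $w_z\in N_G(z)\setminus(\{y_1\}\cup I)$ and setting $A=\{w_z:z\in T\}$ produces an independent set of the required type.

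The main obstacle I anticipate is the degenerate case in which some $z\in T$ satisfies $N_G(z)\subseteq\{y_1\}\cup I$, so that no admissible $w_z$ exists. I will handle this by instead placing $z^2$ itself in $J$: this dominates $y_1^2$, but in return creates a strong support at an alternative vertex (most naturally at $z^2$ or at a neighbour of $z^2$ in layer two) whose two leaf-neighbours are forced by the girth-$\geq 5$ constraint; or, if that too fails, I will run the entire construction symmetrically with $y_2$ in place of $y_1$. The $g(G)\geq 5$ hypothesis keeps the number of subcases to a small finite list, and in each the verification that the designated vertex is a strong support of $(G\Box K_2)-N[J]$ is a direct check of a few neighbourhoods in the Cartesian product, after which Lemma~\ref{lem:basic} completes the proof.
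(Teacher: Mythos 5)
Your overall strategy is the right one and matches the paper's in spirit (reduce to $\delta(G)\ge 2$ via Corollary~\ref{cor:leaves}, then build an independent set whose removal leaves a strong support vertex, and invoke Lemma~\ref{lem:basic}). But there is a genuine gap in the step you call the ``generic case.'' You set $A=\{w_z : z\in T\}$ with $w_z\in N_G(z)\setminus(\{y_1\}\cup I)$ and assert this ``produces an independent set of the required type.'' The girth-$5$ facts you list (each $z\in T$ is non-adjacent to $x$, each $w_z$ is non-adjacent to $y_1$, distinct elements of $T$ are pairwise non-adjacent) are all correct, but none of them gives independence of $A$ itself: for distinct $z,z'\in T$ an edge $w_zw_{z'}$ closes only a $5$-cycle $y_1\,z\,w_z\,w_{z'}\,z'\,y_1$, which girth $5$ permits. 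Concretely, if $z_1,z_2\in T$ both have degree $2$ with $N_G(z_1)=\{y_1,w_1\}$, $N_G(z_2)=\{y_1,w_2\}$, $w_1w_2\in E(G)$ and $w_1,w_2\notin I$, then the choices of $w_{z_1},w_{z_2}$ are forced and adjacent, so no admissible $A$ of your form exists. This is not a fringe situation: it is precisely the configuration the paper isolates in case (b) of its proof (``So we may assume that $wz\in E(G)$''), and handling it requires abandoning the target pair $\{x^1,y_1^2\}$ in favor of a different strong support (the paper makes $a_1^1$ and $x^1$, or $b_1^1$ and $x^1$, into leaves) together with a further split on whether the forced dominators lie in $\cup_{i\ge 2}A_i$.

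The same criticism applies to your ``degenerate case'' ($N_G(z)\subseteq\{y_1\}\cup I$): placing $z$ itself in $A$ destroys $y_1^2$ as a leaf (since $z\sim y_1$), and your fallback --- that this ``creates a strong support at an alternative vertex'' or that one can ``run the entire construction symmetrically with $y_2$'' --- is a promise rather than a proof; the obstruction can occur simultaneously at every neighbor of $x$. These unresolved configurations are exactly where the paper's proof does its real work (its cases (b) and (c) occupy most of the argument and use several differently targeted sets $S$, each with its own verification). So the proposal correctly sets up the framework and disposes of the easy instances, but the cases it defers are the substance of the theorem, and at least one of its explicit constructions fails as stated.
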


\proof
By Corollary~\ref{cor:leaves} we may assume $G$ has  minimum degree at least $2$. Let $x$ be an isolatable vertex of $G$,
let $N(x) = \{y_1, \dots, y_k\}$, and let $A_i$ represent the vertices other than $x$ that are adjacent to $y_i$ for $i \in [k]$. Since
$G$ has girth at least 5, $N(x)$ is independent and the sets $A_1,\ldots,A_k$ are independent and pairwise disjoint.

Suppose first that one of the sets $A_1, \ldots,A_k$, say $A_1$, has cardinality 1. Let $A_1 = \{a\}$.
Note by assumption that there exists an independent set $J$ that
isolates $x$ in $G - N[J]$ and necessarily it must contain $a$. This implies that $y_1^1$ and $x^2$ are leaves
adjacent to the same support vertex in $G \Box K_2 - N[J^2]$.  We conclude that in this case $G\Box K_2$ is not well-covered.

So we may assume that $|A_i| \ge 2$ for all $i \in[k]$. Let $I$ be a maximal independent set of $G - N[\{y_1, \dots, y_k\}]$.

\begin{enumerate}
\item[(a)] Suppose first that $I$ dominates at least one of the sets $A_1,\ldots,A_k$, say $A_1$.
Let $J$  be an independent set that isolates $x$ in $G - N[J]$ and
let $S=I^1 \cup \left( \cup_{i\in [k]}(J \cap A_i) \right)^2$.
It follows that $S$ is independent in $G \Box K_2$, and  $y_1^1$ and $x^2$ are leaves in $G \Box K_2 - N[S]$ both adjacent to $x^1$.
Again this implies that $G\Box K_2$ is not well-covered.

\item[(b)] Now suppose that there exists $i\in[k]$ such that $I$ does not dominate at least two vertices of $A_i$.
Without loss of generality we may assume $i=1$ and $a_1$ and $b_1$ are vertices in $A_1$ not dominated by $I$.
Suppose first that $\deg_G(a_1) >2$
and let $w \in N(b_1) - \{y_1\}$. Thus, $w$ can be adjacent to only one vertex of $N(a_1) - \{y_1\}$ for otherwise $G$ contains
a $4$-cycle. In this case, choose $z \in N(a_1) - \{y_1\}$ that is not adjacent to $w$ and choose
$S = I^1 \cup \{y_2^1, \dots, y_k^1, w^2, z^2\}$ so that $a_1^1$ and $b_1^1$ are leaves in $G \Box K_2 - N[S]$. A similar argument
can be used when $\deg_G(b_1) > 2$.

Next, suppose that $\deg_G(a_1) = 2$ and $\deg_G(b_1) = 2$ where $N(a_1) = \{y_1, z\}$ and $N(b_1) = \{y_1, w\}$.
If $wz \not\in E(G)$, then again choose $S = I^1 \cup \{y_2^1, \dots, y_k^1, w^2, z^2\}$ so that $a_1^1$ and $b_1^1$ are leaves
in $G \Box K_2 - N[S]$. So we may assume that $wz \in E(G)$.

Suppose first that $z \not\in \cup_{i=2}^k A_i$. Let $J$ be any independent set in $G$ that isolates $x$ in $G - N[J]$.
Let $M = (J-\{z\}) - (J\cap A_1)$
 and choose $S = M^1 \cup \{z^2, y_2^2\}$. One can easily verify that $S$ is indeed an independent set in $G \Box K_2$. We claim
 that $a_1^1$ and $x^1$ are leaves in $G\Box K_2 - N[S]$. Note that $N(a_1^1) = \{a_1^2, y_1^1, z^1\}$ and
 $N(x^1) = \{x^2, y_1^1, \dots, y_k^1\}$. The vertices $z^1$ and $a_1^2$ are dominated by $z^2$ so $a_1^1$ is a leaf, $M^1$ dominates
 $y_2^1, \dots, y_k^1$, and $y_2^2$ dominates $x^2$. Thus, $x^1$ is also a leaf. A similar argument works if $w \not\in \cup_{i=2}^k A_i$.

So we may assume that $z \in A_i$ and $w \in A_j$ for some $i,j \in \{2, \dots, k\}$. It follows that $i \ne j$ since $A_i$ is
an independent set. If every independent set that isolates $x$ contains $z$, then none of these independent sets contain $w$ as
$wz \in E(G)$. In this case, let $J$ be such an independent set that doesn't contain $w$, and let $M = J - (J \cap A_1)$. Let
$S = M^1 \cup \{w^2, y_i^2\}$. Note that $S$ is independent since $w \not\in J \cup A_i$. We claim that $b_1^1$ and $x^1$ are leaves in
$G \Box K_2 - N[S]$. Note that $N(b_1^1) = \{b_1^2, w^1, y_1^1\}$ and $N(x^1) = \{x^2, y_1^1, \dots, y_k^1\}$. The vertex $w^2$ dominates
$b_1^2$ and $w^1$ so $b_1^1$ is a leaf. Since $M^1$ dominates $\{y_2^1, \dots, y_k^1\}$ and $y_i^2$ dominates $x^2$, it
follows that $x^1$ is a leaf.
On the other hand, if there exists an independent set that isolates $x$ and does not contain $z$, we may choose a set $S$ so that
$a_1^1$ and $x^1$ are leaves in $G \Box K_2 - N[S]$.  In each of these cases the removal of the closed neighborhood of an
independent set from $G \Box K_2$ created a strong support, and hence $G \Box K_2$ is not well-covered by Lemma~\ref{lem:basic}.

\item[(c)] Finally, suppose that $I$ does not dominate exactly one vertex from each $A_i$ for $i \in [k]$, say $a_i$. Note that if
$\deg_G(x) \ge 3$, then there exist $i$ and $j$ such that $a_ia_j \not\in E(G)$ for otherwise $G$ would contain a triangle. Without
loss of generality, we may assume $i = 1$ and $j = 2$. In this case, choose $S = I^1 \cup \{a_1^2, a_2^2\}$ and note that $y_1^1$ and
$y_2^1$ are leaves, both adjacent to the same support vertex in $G\Box K_2 - N[S]$.
If $\deg_G(x) = 2$ and $a_1a_2 \not\in E(G)$, then the above set $S$ still works. So we
may assume that $a_1a_2 \in  E(G)$. Choose $S = I^1 \cup \{a_2^1, a_1^2, t^2\}$ where $t \in A_2 - \{a_2\}$. One can verify that
$S$ is an independent set. We claim that $y_1^1$ and $x^2$ are leaves, both adjacent to $x^1$ in $G\Box K_2 - N[S]$.
Note that $N(y_1^1) = \{x^1, y_1^2\} \cup A_1^1$ and $N(x^2) = \{x^1, y_1^2,y_2^2\}$. The set $I^1 \cup \{a_2^1\}$ dominates
all of $A_1^1$ and $\{a_1^2, t^2\}$ dominates $y_1^2$ and $y_2^2$.
Thus, $y_1^1$ and $x^2$ are indeed leaves.  Hence, by Lemma~\ref{lem:basic} $G \Box K_2$ is not well-covered.
\end{enumerate}
Having considered all cases, we may conclude that $G \Box K_2$ is not well-covered.  \qed

It is straightforward to verify that both of the prisms $K_2 \Box K_2$ and $C_5 \Box K_2$ are well-covered.  This together
with Theorems~\ref{thm:noisolatable}, \ref{thm:girth5noisolates}, \ref{thm:girth5isolatable} proves Theorem~\ref{thm:girths5},
which we restate here for completeness.

\noindent \textbf{Theorem~\ref{thm:girths5}}. \emph{
Let $G$ and $H$ be nontrivial, connected graphs, both of which have girth at least $5$.  The Cartesian product
$G \Box H$ is well-covered if and only if  $G \Box H \cong K_2 \Box K_2$ or  $G \Box H \cong C_5 \Box K_2$.
}

We now show that, regardless of girth, if the base graph has no isolatable vertices,
then its prism is well-covered.  Of course by the characterization theorems of Finbow et al.~every well-covered graph
of girth larger than $5$ has an isolatable vertex.
\begin{thm} \label{thm:wcbasenoisolatables}
If $G$ is a well-covered graph with no isolatable vertices, then $G \Box K_2$ is well-covered.
\end{thm}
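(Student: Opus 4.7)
The plan is to prove that every maximal independent set of $G\Box K_2$ has cardinality $2\alpha(G)$, from which well-coveredness of $G\Box K_2$ follows immediately. So let $S$ be any maximal independent set of $G\Box K_2$ and decompose it as $S=S_1^{\,1}\cup S_2^{\,2}$, where $S_i=\{g\in V(G):g^i\in S\}$ for $i\in[2]$. Then $S_1$ and $S_2$ are disjoint independent sets of $G$ with $|S|=|S_1|+|S_2|$. Applying the maximality of $S$ to each vertex $g^1$ and $g^2$ outside $S$ yields the domination identity
\[V(G)=S_1\cup S_2\cup N_G(S_1)=S_1\cup S_2\cup N_G(S_2).\]

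Set $X:=V(G)\setminus N_G[S_1]$. The first equality above forces $X\subseteq S_2$, so $X$ is independent (as a subset of $S_2$) and has no $G$-edge to $S_1$; hence $I:=S_1\cup X$ is independent in $G$ and $N_G[I]=N_G[S_1]\cup X=V(G)$, so $I$ is maximal independent in $G$. Well-coveredness of $G$ then gives $|S_1|+|X|=\alpha(G)$. The crux of the argument is to show $X=\emptyset$. Suppose, toward a contradiction, that $v\in X$, and let $J:=I\setminus\{v\}$. Then $J$ is independent with $v\in V(G)\setminus N_G[J]$, and since $I$ is maximal in $G$ we have $V(G)\setminus N_G[J]\subseteq N_G[v]$. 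Because $v$ is not isolatable in $G$ by hypothesis, $V(G)\setminus N_G[J]$ properly contains $\{v\}$, so there is some $u\in N_G(v)$ with $u\notin N_G[J]$.

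This $u$ produces the desired contradiction. Since $J\supseteq S_1$, we have $u\notin N_G[S_1]$, so $u\notin S_1$ and $u$ has no $G$-neighbor in $S_1$; and since $u$ is adjacent to $v\in S_2$, independence of $S_2$ forces $u\notin S_2$. Thus $u\notin S_1\cup S_2\cup N_G(S_1)$, contradicting the displayed equation. Hence $X=\emptyset$, so $S_1$ is a maximal independent set of $G$ and $|S_1|=\alpha(G)$; exchanging the roles of $S_1$ and $S_2$ in the argument yields $|S_2|=\alpha(G)$ as well, and therefore $|S|=2\alpha(G)$, as desired. The main obstacle is identifying the correct independent set to test against non-isolatability: the choice $J=I\setminus\{v\}$, where $I$ is assembled from $S_1$ together with the undominated residue $X\subseteq S_2$, is exactly what forces the witness vertex $u$ of non-isolatability to collide with the layer-$1$ maximality constraint coming from $S$.
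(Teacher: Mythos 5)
Your proof is correct, and while it shares the paper's overall strategy (show that every maximal independent set of $G\Box K_2$ meets each $G$-layer in exactly $\alpha(G)$ vertices), the mechanism you use for the key step is genuinely different. The paper supposes $|S\cap V(G^1)|<\alpha(G)$, enlarges the layer-$1$ trace to an independent set of size $\alpha(G)-1$, and argues that the undominated remainder of that layer is a clique of order at least $2$ (nonempty by well-coveredness, of order at least $2$ by non-isolatability, a clique by $\alpha(G)=m$); this clique must then be dominated from layer $2$, forcing two adjacent vertices of $S$ there. You instead look at the residue $X=V(G)\setminus N_G[S_1]$, observe that $X\subseteq S_2$ so that $S_1\cup X$ is maximal independent in $G$, and use non-isolatability of a vertex $v\in X$ against the set $J=(S_1\cup X)\setminus\{v\}$ to produce a neighbor $u$ of $v$ that is dominated by $S$ in neither coordinate, contradicting maximality of $S$ directly. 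Your route avoids the enlargement to size $m-1$ and the clique observation (whose justification in the paper is rather terse, since the ``order at least $2$'' part silently invokes the no-isolatable-vertex hypothesis), and it isolates cleanly where each hypothesis is used: non-isolatability drives the contradiction that forces $X=\emptyset$, and well-coveredness enters only at the end to convert ``$S_1$ is maximal independent in $G$'' into $|S_1|=\alpha(G)$. The paper's clique argument, in exchange, packages the structure of near-maximum independent sets in such graphs into a reusable observation. Both are complete proofs.
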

\proof  Let $m=\alpha(G)$ and let $I$ be any maximal independent set of $G \Box K_2$.  Suppose that
$|I \cap V(G^1)| < m$.  If necessary, enlarge $I \cap V(G^1)$ to an independent set $A$ of $G^1$
such that $|A|=m-1$.  The subgraph $G^1-N[A]$ is a clique of order at least 2, for otherwise $G^1$
would contain an independent set of cardinality at least $m+1$, which is a contradiction.  This
implies that $G^1-N[I \cap V(G^1)]$ contains a clique $C$ with vertices $x_1^1,\ldots,x_k^1$ for some $k\ge 2$.
The vertices in $C$ are dominated by the set $I \cap V(G^2)$, which implies that $x_1^2, x_2^2 \in I$.
Since $C$ is a clique, we have that $x_1x_2 \in E(G)$, which contradicts the assumption that $I$ is
an independent set.  Consequently, $|I \cap V(G^1)| = m$.  Similarly, $|I \cap V(G^2)|=m$.  We conclude
that $|I|=2m$, and hence $G \Box K_2$ is well-covered.  \qed

In particular, if $g(G)=4$ and $G$ has no isolatable vertex, then by Theorem~\ref{thm:noisolatable}
it follows that $G$ is well-covered.  This proves the following corollary to Theorem~\ref{thm:wcbasenoisolatables}.

\begin{cor} \label{cor:girth4noisolatables}
If $G$ has girth $4$ and has no isolatable vertex, then the prism of $G$ is well-covered.
\end{cor}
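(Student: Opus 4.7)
The plan is to obtain the corollary as a direct two-step chase through the two immediately preceding results, without any new independent-set construction of our own. First I would observe that the hypothesis ``$G$ has girth $4$'' implies, in particular, that $g(G)\ge 4$, so that together with the assumption that $G$ has no isolatable vertex, Theorem~\ref{thm:noisolatable} applies and yields that $G$ is well-covered.

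Next I would feed this strengthened conclusion back into Theorem~\ref{thm:wcbasenoisolatables}: we now have a well-covered graph $G$ which still has no isolatable vertex, which is exactly the hypothesis of that theorem. Its conclusion gives immediately that $G \Box K_2$ is well-covered, which is what we want.

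There is essentially no obstacle here; the corollary is by design a concatenation of Theorems~\ref{thm:noisolatable} and~\ref{thm:wcbasenoisolatables}, with the role of girth~$4$ being simply to activate the first theorem (for larger girth the same argument works, but in those cases the absence of isolatable vertices forces $G\in\{K_2,C_5\}$ by Theorem~\ref{thm:girth5noisolates}, so the girth-$4$ case is the genuinely new content). I therefore expect the written proof to be a single sentence invoking the two theorems in order.
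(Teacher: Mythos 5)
Your proof is correct and matches the paper's own argument exactly: the paper derives the corollary by applying Theorem~\ref{thm:noisolatable} to conclude $G$ is well-covered and then invoking Theorem~\ref{thm:wcbasenoisolatables}. Nothing further is needed.
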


There are many examples of $1$-well-covered graphs of girth $4$ as in the hypothesis of Corollary~\ref{cor:girth4noisolatables}.
One such graph, commonly known as $WL_8$, is shown in Figure~\ref{fig:WL8}.

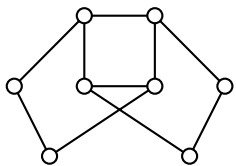
\begin{figure}[htb]
\tikzstyle{every node}=[circle, draw, fill=black!0, inner sep=0pt,minimum width=.2cm]
\begin{center}
\begin{tikzpicture}[thick,scale=.7]
  \draw(0,0) { % <-- START CO-ORDINATES
    %%%%EDGES
    +(1.33,2.67) -- +(2.67,2.67)
    +(2.67,2.67) -- +(2.67,1.33)
    +(2.67,1.33) -- +(1.33,1.33)
    +(1.33,1.33) -- +(1.33,2.67)
    +(1.33,2.67) -- +(0.00,1.33)
    +(0.00,1.33) -- +(0.67,0.00)
    +(0.67,0.00) -- +(2.67,1.33)
    +(1.33,1.33) -- +(3.33,0.00)
    +(3.33,0.00) -- +(4.00,1.33)
    +(4.00,1.33) -- +(2.67,2.67)
    %%%%VERTICES
    +(1.33,2.67) node{}
    +(2.67,2.67) node{}
    +(1.33,1.33) node{}
    +(2.67,1.33) node{}
    +(0.67,0.00) node{}
    +(3.33,0.00) node{}
    +(0.00,1.33) node{}
    +(4.00,1.33) node{}
%    +(-.4,1.33) node[rectangle, draw=white!0, fill=white!100]{$u$}
%   +(4.4,1.33) node[rectangle, draw=white!0, fill=white!100]{$v$}
%   +(0.67,-.4) node[rectangle, draw=white!0, fill=white!100]{$x$}
%   +(3.33,-.4) node[rectangle, draw=white!0, fill=white!100]{$y$}
    };
\end{tikzpicture}
\end{center}
\vskip -0.6 cm \caption{The graph $WL_8$.} \label{fig:WL8}
\end{figure}

%%%%%%%%%%%%%%%%%%%%%%%%%%%%%%%%%%%%%%%%%%%%%%%%%%%%%%

\section{Summary}
We have shown that if a Cartesian product of two nontrivial, connected, triangle-free graphs is well-covered,
then this Cartesian product is a prism, say $G \Box K_2$.  In addition, if $G$ has girth at least $5$, then $G$ is either $K_2$
or $C_5$, and indeed the prisms $K_2\Box K_2$ and $C_5 \Box K_2$ are both well-covered.
If the girth of $G$ is $4$ and
$G$ has no isolatable vertex, then the prism $G \Box K_2$ is well-covered.  We suspect that being well-covered and having no
isolatable vertex is also a necessary condition for the prism of a graph of girth $4$ to be well-covered.
We end by stating this as a conjecture.

\begin{conj}
Let $G$ be a connected, triangle-free graph that contains a cycle of order $4$.  If $G \Box K_2$ is well-covered,
then $G$ has no isolatable vertex.
\end{conj}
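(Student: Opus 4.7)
The plan is to establish the contrapositive: assume $G$ is a connected, triangle-free graph containing a $C_4$ and having an isolatable vertex $x$, and show $G \Box K_2$ is not well-covered by producing an independent set $S \subseteq V(G \Box K_2)$ such that $G \Box K_2 - N[S]$ contains a strong support vertex. The conclusion then follows from Lemma~\ref{lem:basic}, exactly as in Theorem~\ref{thm:girth5isolatable}.

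First I would reduce to $\delta(G) \ge 2$ via Corollary~\ref{cor:leaves}. Writing $N(x) = \{y_1, \ldots, y_k\}$ and $A_i = N(y_i) - \{x\}$, triangle-freeness still gives that $N(x)$ is independent and that each $A_i$ is independent. The new feature compared with the girth-$5$ setting is that the sets $A_i$ may intersect pairwise; each common vertex $a \in A_i \cap A_j$ records a $4$-cycle $x y_i a y_j$ through $x$, so the list $A_1, \ldots, A_k$ is only a cover of $N(N(x)) - \{x\}$, not a partition.

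I would then mirror the case analysis in the proof of Theorem~\ref{thm:girth5isolatable}, branching on the behavior, relative to $A_1,\ldots,A_k$, of a maximal independent set $I$ of $G - N[\{y_1,\ldots,y_k\}]$. The subcase $|A_i|=1$ and the subcase in which $I$ dominates some entire $A_i$ should go through essentially verbatim, because their constructions touch only one $A_i$ at a time and need only triangle-freeness. In the subcase where $I$ fails to dominate at least two vertices $a_1, b_1 \in A_i$, the previous proof used the absence of $4$-cycles to guarantee vertices $w \in N(b_1)-\{y_1\}$ and $z \in N(a_1)-\{y_1\}$ with $wz \notin E(G)$; since this is no longer automatic, the adaptation should either exhibit such $w,z$ using only triangle-freeness (e.g.\ by a counting argument inside $N(a_1) \cup N(b_1)$), or, when no such pair exists, exploit the resulting forced $4$-cycle structure around $a_1, b_1$ to build $S$ of a different shape that still creates twin leaves in $G \Box K_2 - N[S]$.

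The main obstacle is the final case, in which $I$ misses exactly one vertex $a_i$ from each $A_i$. In girth $5$ one automatically has both $a_ia_j \notin E(G)$ and $A_i \cap A_j = \emptyset$, which makes the choice $S = I^1 \cup \{a_i^2, a_j^2\}$ immediately independent. In the girth-$4$ regime both properties can fail simultaneously, and the $a_i$'s themselves may induce a fairly rich subgraph. I therefore expect this case to require the most delicate analysis, likely a refined sub-case split based on which overlaps $A_i \cap A_j$ and which edges $a_ia_j$ occur, leveraging the hypothesis that a $4$-cycle exists somewhere in $G$, either near $x$, yielding an extra dominator, or far from $x$, where the construction can be localized by first removing a suitable independent set. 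Showing that one of these configurations always yields the desired strong support in $G \Box K_2 - N[S]$ is precisely what keeps the statement a conjecture.
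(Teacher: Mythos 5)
This statement is stated in the paper as a conjecture, not a theorem: the authors offer no proof of it, and explicitly flag it as open. So there is no paper proof to compare against, and the relevant question is whether your proposal actually closes the gap. It does not. What you have written is a plan that transplants the case structure of Theorem~\ref{thm:girth5isolatable} into the girth-$4$ setting, correctly identifies where that argument breaks (the sets $A_i$ need no longer be pairwise disjoint, the vertices $w,z$ with $wz\notin E(G)$ need no longer exist, and the undominated vertices $a_i$ may be pairwise adjacent or shared among several $A_i$), and then defers every one of those breakpoints to an unspecified ``refined sub-case split'' or ``counting argument.'' In particular, the subcase where $I$ misses two vertices of some $A_i$ and the final subcase where $I$ misses exactly one vertex from each $A_i$ are exactly the places where a proof would have to do real work, and your proposal supplies no construction of $S$ there -- you acknowledge as much in your closing sentence. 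An outline that ends by observing that the remaining cases are ``precisely what keeps the statement a conjecture'' is a description of the problem, not a proof of the statement.

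A secondary issue worth noting: even the cases you claim go through ``essentially verbatim'' deserve more care than you give them. For instance, in the subcase $|A_1|=1$ with $A_1=\{a\}$, the girth-$5$ proof uses that $a$ is not adjacent to any $y_j$ and that the independent set $J$ isolating $x$ must contain $a$; with $4$-cycles permitted, $a$ may lie in several $A_j$, which changes which vertices of $G\Box K_2 - N[J^2]$ are actually isolated or become leaves, so independence of the chosen set and the claimed strong support both need to be re-verified rather than asserted. If you want to make progress on this conjecture, the place to start is a precise structural analysis of the overlaps $A_i\cap A_j$ and of the $4$-cycles through $x$, since those are what every branch of the argument must contend with.
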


\section*{Acknowledgements}
The second author is supported by a grant from the Simons Foundation (Grant Number 209654 to Douglas F. Rall).

\end{document}